\tikzset{snake it/.style={decorate, decoration=snake}}
\keywords{Stochastic embeddings, Distortion, Slashpowers, Trees}
\subjclass[2010]{05C5, 68R10}
\newtheorem{thm}{Theorem}[section]
\newtheorem{lem}[thm]{Lemma}
\newtheorem{cor}[thm]{Corollary}
\newtheorem{prop}[thm]{Proposition}
\theoremstyle{definition}
\newtheorem{exs}[thm]{Examples}
\newtheorem*{rem}{Remark}
\newtheorem*{thmmain}{Main Theorem}
\newcommand{\N}{\mathbb{N}}
\renewcommand{\P}{\mathbb{P}}
\newcommand{\R}{\mathbb{R}}
\newcommand{\D}{\mathbb{D}}
\newcommand{\E}{\mathbb{E}}
\newcommand{\cC}{\mathcal C}
\newcommand{\cM}{\mathcal M}
\newcommand{\cS}{\mathcal S}
\newcommand{\cT}{\mathcal T}
\newcommand{\keq}{\!=\!}
\newcommand{\ie}{{\it i.e.,}} 
\newcommand{\length}{\text{\rm length}}
\title{Stochastic Embeddings of Graphs  into Trees}
\author{Th. Schlumprecht}
\address{Department of Mathematics, Texas A\&M University, College Station, TX 77843}
\email{t-schlumprecht@tamu.edu}
\author{G. Tresch}
\address{Department of Mathematics, Texas A\&M University, College Station, TX 77843}
\email{treschgd@tamu.edu}
\thanks{The research was partially supported by the National Science Foundation under Grant Number DMS-2054443. This paper is part of the Ph.D. thesis
of the second named author.}
\begin{document}
\begin{abstract}  It is known that every graph with $n$ vertices embeds stochastically into trees with distortion $O(\log n)$. In this paper, we 
 show that this upper bound is sharp for a large class of graphs.  As this class of graphs contains diamond graphs, this result extends known examples that obtain this largest possible stochastic distortion.
\end{abstract}

\maketitle

\tableofcontents
\allowdisplaybreaks

\section{Introduction}

The continued development of the theory of low distortion embeddings of general finite metric spaces into simple metric spaces has been important for understanding how complex metric structures can be efficiently represented. Furthermore, the development of such theory often results in fruitful applications for computer science approximation algorithms (see \cite{BhattLeighton1984,LeightonRao1999,KleinRaoAgrawalRavi1995} for classic applications involving the sparsest cut problem and how it relates to embeddings into $\ell_1$ metrics). Due to this algorithmic connection, a particularly desirable family of simple metric spaces is weighted trees. However, there are simple graphs  that cannot be well approximated by trees. For example, any embedding of the $n$-cycle into a tree must incur a distortion of at least order $n$ \cite{RabinovichRaz1998}
 (see also Lemma \ref{L:3.1} below). Nevertheless, there has been progress when the problem is generalized to stochastic embeddings.

Stochastic embeddings (also called random or probabilistic embeddings) were originally defined by Bartal \cite{Bartal1996}. Since finite, weighted trees are isometrically isomorphic to finite dimensional $\ell_1$ spaces, the existence of a stochastic embedding of a metric space into a family of trees implies the existence of a bi-Lipshchitz embedding into $\ell_1$ without an increase of the distortion. More recently, it has been shown that a stochastic embedding of a finite metric space into a family of trees can be used to obtain bi-Lipshitz embeddings of lamplighter metric spaces and transportation cost spaces into $L_1$ (see \cite{BMSZ2022,DilworthKutzarovaOstovskii2021,Mathey-PrevotValette2021}).

Also in \cite{Bartal1996} Bartal shows that any finite metric space on $n$ elements can be stochastically embedded into trees with stochastic distortion $O(\log^2(n))$. This upper bound was lowered to $O(\log(n))$ by Fakcharoenphol et al \cite{FakcharoenpholRaoTalwar2004} which was known to be optimal by the  observation of Bartal that all expander graphs must exhibit $\Omega(\log(n))$ stochastic distortion \cite{Bartal1996}. More basic families of metric spaces that achieve this bound have been since found such as the 2-dimensional grid \cite{AlonKarpPelegWest1995} and a family of graphs, called the diamond graphs \cite{GuptaNewmanRabinovichSinclair1999}. Our main result is a generalization of the latter result.

 \begin{thmmain} Suppose $(G,d_{G})$ is a normalized geodesic  $s$-$t$ graph. For $k\in\N$  let $(G^{\oslash k}, d_{G^{\oslash k}})$ be its $k$-th slash power, and $\mathcal{S}_{\mathcal{T}}(G^{\oslash k}, d_{G^{\oslash k}})\ge1$ be the stochastic distortion 
 of $(G^{\oslash k}, d_{G^{\oslash k}})$ into the family of geodesic trees $\cT$.
 Then
\[\mathcal{S}_{\mathcal{T}}(G^{\oslash k}, d_{G^{\oslash k}})= \begin{cases}
    \Theta(k) & G\ \text{contains a cycle,} \\
    1 & G\ \text{is a path.}
\end{cases}\]

\end{thmmain}

Thus, slash powers provide a large family of graphs with the largest possible stochastic distortion.

The paper is organized as follows. In Section 2, we state the necessary notation and relevant definitions, including the construction of slash powers of graphs and \textit{expected distortion}. In addition, we show that given a probability measure on the edges of a graph the corresponding expected distortion of a map into an expanding tree is a lower bound for the stochastic distortion of the graph into a family of trees. In Section 3, we state a mild strengthening of a result of Gupta \cite{Gupta2001} on the lower bounds for distortion of a weighted cycle into a tree. Then in Section 4, we prove the desired result for slash powers of a vital family of weighted graphs that we call \textit{balanced generalized Laakso graphs}. A proof of the Main Theorem is then given in Section 5. \\

\section{Main Definitions and Preliminary Results}\label{S:2}

\medskip\noindent 2.1. {\bf Graphs}\label{SS:2.1}
For a simple, finite graph $G=(V,E)$ we let $V(G)$ be the vertex set $V$ and $E(G)\subset[V(G)]^2=\big\{\{u,v\}: u,v\in V(G), v\not=u\big\}$ be the set of edges $E$. In the case that $G$ is a directed graph, we use $E(G)$ to denote the edges of $G$ without orientation and $E_d(G)$ to denote the directed edges under an orientation $d$.
 For $e=(u,v)\in E_d(G)$ we put $e^-=u$ and $e^+=v$.

A {\em subgraph of $G$ } is  a graph $G'$, where $V(G')\subset V(G)$ and 
$E(G')\subseteq E(G)\cap\big\{\{u,v\}: u,v\in V(G')\big\}$. 

We call a subgraph $P=(V(P),E(P))\subset (V(G),E(G))$ \textit{a path in }$G$ if there is $k\in \N$ and a labeling of the distinct vertices of $P$, $V(P)=\{x_0,x_1,\ldots, x_k\}$ such that 
$E(P)=\{\{x_{i-1},x_i\} : i\in \{1,\ldots, k\}\}$. We call the above subgraph $P$ \textit{a directed path in } $G=(V(G), E_d(G))$ if, in addition, the edges of $P$ agree with the orientation of $E(G)$ (i.e. $E_d(P)=\{(x_{i-1},x_i) : i\in \{1,\ldots, k\}\}$ where $d$ is a fixed orientation on $E(G)$). We shall often denote a (directed or undirected) path by $ P=(x_i)_{i=0}^k$ or $(x_0,\ldots, x_k)$
and call it a {\em path from $x_0$ to $x_k$}, or {\em between $x_0$ and $x_k$}.
For two paths $A:=(a_i)_{i=0}^{p}$ and $B:=(b_i)_{i=0}^q$ with $a_p=b_0$, we use $A\smile B$ to denote the concatenation of the two paths at the vertex $a_p$. More specifically, $V(A\smile B)=\{a_0,a_1,\ldots, a_p=b_0,b_1,\ldots, b_q\}$ and $E(A\smile B)=\{\{a_{i-1},a_i\}: i\in \{1,\ldots, p\}\}\cup\{\{b_{i-1},b_i\}: i\in \{1,\ldots, q\}\}$. Note that the concatenation of the two paths above is a path iff $V(A)\cap V(B)=\{a_p=b_0\}$.
 
 A subgraph $C=(V(C),E(C)) \subset (V(G),E(G))$ \textit{a cycle in }$G$, if   the distinct vertices of $C$, can be ordered into  $V(C)=\{x_0,x_1,\ldots, x_k\}$, with $k\ge 3$, and $x_0=x_k$, such that 
$E(P)=\big\{\{x_{i-1},x_i\} : i\in \{1,\ldots, k\}\big\}$.

A graph  $G$ is called {\em connected}, if for any two vertices there is a path between them.
 A {\em tree } is a connected graph that does not contain a cycle, or equivalently, between any two vertices there is a unique path between them.  
 
\medskip\noindent 2.2. {\bf Geodesic Metrics}\label{SS:2.2}
If $G$ is a connected graph and  $d_G$ is a metric on $V(G)$, we call $d_G$ a {\em geodesic metric on $G$} if 
$$d_G(u,v)=  \min\{ \length_{d_G}(P): P\text{ is a path from $u$ to $v$}\},\text{ for $u,v\in V$,}$$
where for a path $P=(x_j)_{j=0}^n$ in $G$, we define the length of $P$ by
$$\length_{d_G}(P)=\sum_{j=1}^n d_G(x_{j-1},x_j).$$
and sometimes refer to this value as the \textit{metric length of} $P$. In this case, we call the pair $(G,d_G)$ a {\em geodesic graph}.
For $e=\{u,v\}\in E(G)$ we denote $d_G(e)=d_G(u,v)$.

Assume that $w:E(G)\to \R^+$ is a function. 
Define for  $u,v\in V(G)$
$$d_G(u,v):=\min\Big\{ \sum_{j=1}^n w(\{x_{j-1},x_j\}) : (x_j)_{j=0}^n \text{ is a path from $u$ to $v$}\Big\}.$$
Then $d_G$ is a geodesic metric on $G$, and we call it the {\em metric generated by the weight function} $w$.
Note that if $w:E(G)\to \R^+$ is an arbitrary function and $d_G$ the geodesic metric generated by $w$, it does not necessarily
follow that for an edge $e$ we have $d_G(e)=w(e)$, since it might be possible that there is a  path of shorter metric length between the two endpoints of $e$.

For the special weight function, defined by $w(e)=1$ for all $e\in E(G)$, and for a path $P$ in $G$ we call $\sum_{e\in E(P)}w(e)=|E(P)|$ the {\em graph length of $P$}.

Conversely, any geodesic metric on $G$ is generated by the weight function
$$w: E\to \R^+, \qquad \{u,v\}\mapsto d_G(u,v).$$

If $(G,d_G)$ is a geodesic graph and $H=(V(H),E(H))$ is a connected subgraph
we call $(H,d_H)$, where $d_H$  is the geodesic distance on $V(H)$ generated by the weight
$w: E(H)\to \R^+$, $e\mapsto d_G(e)$, an \textit{induced geodesic subgraph} and $d_H$ the \textit{induced geodesic metric of $d_G$ on $V(H)$}. Note that $d_H$ is not necessarily the restriction of $d_G$ on $V(G)$ (for example, if $G$ is a cycle and $H$ is obtained
by taking away one edge). In the scenario when $d_H(x,y)=d_G(x,y)$ for all $x,y\in V(H)$ we say that $H$ is an \textit{isometric geodesic subgraph} of $G$ or, when the context is clear, simply an \textit{isometric subgraph}.\\

\medskip\noindent{2.3. \bf $s$-$t$ Graphs}\label{SS:2.3}
We call a connected, graph $G=(V(G),E(G))$ an {\em $s$-$t$ graph} if it has  two distinguished vertices denoted by  $s=s(G)$ and $t=t(G)\in V(G)$
so that $G$ can be turned into a directed graph $G_d=(V(G), E_d(G))$ where every edge  $e\in E_d(G)$ is an element of a  directed  path from $s(G)$ to $t(G)$. When given such an orientation, we call $G$ a \textit{directed} $s$-$t$ graph and note that under this definition every vertex $v\in V(G)$ lies on a directed path from $s(G)$ to $t(G)$.

Let $G=(V(G),E(G))$ be an $s$-$t$ graph and let $d_G$  be a geodesic metric on $G$.
We say that $(G, d_G)$ is a {\em geodesic $s$-$t$ graph}, if all paths from $s(G)$ to $t(G)$ have the same length, and we  say that 
 $(G, d_G)$ is a  {\em normalized geodesic $s$-$t$ graph}, if that length is $1$. This property has the following important, and in this paper often used, consequence.
 \begin{prop}\label{P:2.3.0} If $(G, d_G)$ is a geodesic $s$-$t$ graph, every path $P=(x_j)_{j=0}^l$ from $x_0=s(G)$ to $x_l=t(G)$ is an isometric subgraph of $G$.
 In particular, this means that 
 $$d_G(x_i,x_j)=\sum_{s=i+1}^j d_G(x_{s-1},x_s) \text{ for $0\le i<j\le l$}.$$
 \end{prop}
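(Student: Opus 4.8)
The plan is to establish the displayed telescoping identity first; the assertion that $P$ is an isometric geodesic subgraph then follows immediately. Indeed, since $P$ is a path, the only path inside $P$ between $x_i$ and $x_j$ (for $i<j$) is the sub-path $(x_i,x_{i+1},\ldots,x_j)$, so the induced geodesic metric $d_P$ of $d_G$ on $V(P)$ satisfies, by its very definition, $d_P(x_i,x_j)=\sum_{s=i+1}^{j}d_G(x_{s-1},x_s)$, which is exactly the right-hand side of the claimed equation. Hence proving the equation simultaneously proves $d_P=d_G$ on $V(P)$.

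One inequality is free: $(x_i,x_{i+1},\ldots,x_j)$ is a path from $x_i$ to $x_j$ in $G$, so the defining property of a geodesic metric gives $d_G(x_i,x_j)\le \sum_{s=i+1}^{j}d_G(x_{s-1},x_s)$.

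For the reverse inequality I would argue by contradiction. Suppose $d_G(x_i,x_j)<\sum_{s=i+1}^{j}d_G(x_{s-1},x_s)$ for some $0\le i<j\le l$. By the definition of the geodesic metric there is a path $Q=(y_0,\ldots,y_m)$ from $y_0=x_i$ to $y_m=x_j$ with $\length_{d_G}(Q)=d_G(x_i,x_j)$. Concatenate the initial segment $(x_0,\ldots,x_i)$ of $P$, the path $Q$, and the terminal segment $(x_j,\ldots,x_l)$ of $P$ to obtain a walk $W$ from $s(G)=x_0$ to $t(G)=x_l$. Summing metric lengths of consecutive vertices,
\[\length_{d_G}(W)=\length_{d_G}(P)-\sum_{s=i+1}^{j}d_G(x_{s-1},x_s)+d_G(x_i,x_j)<\length_{d_G}(P).\]
Since $(G,d_G)$ is a geodesic $s$-$t$ graph, all $s$-$t$ paths — in particular $P$ — have the same length, so $\length_{d_G}(P)=d_G(s(G),t(G))$. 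On the other hand, the telescoping triangle inequality for the metric $d_G$ applied along $W$ yields $\length_{d_G}(W)\ge d_G(s(G),t(G))$. Together with the displayed strict inequality this is a contradiction, which finishes the proof.

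The only point requiring care — and the main (mild) obstacle — is that the concatenation $W$ need not be a \emph{simple} path, because vertices of $Q$ may coincide with vertices of the two $P$-segments; thus one cannot simply apply the definition of $d_G(s(G),t(G))$ as a minimum over paths to $W$ itself. The remedy is exactly the two facts used above: bound $\length_{d_G}(W)$ below by $d_G(s(G),t(G))$ directly through the triangle inequality along the walk, and use the defining property of a geodesic $s$-$t$ graph to identify $\length_{d_G}(P)$ with $d_G(s(G),t(G))$.
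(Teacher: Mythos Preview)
Your argument is correct. The paper states Proposition~\ref{P:2.3.0} without proof, treating it as an immediate consequence of the definition of a geodesic $s$-$t$ graph; your write-up supplies exactly the natural details behind that claim, and in particular your handling of the one subtlety---that the concatenated walk $W$ need not be simple---via the triangle inequality along $W$ is clean and sufficient.
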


  In the case where $d_G(e)=1$ for all $e\in E(G)$ for a geodesic $s$-$t$ graph $(G,d_G)$ then $(G,d_G)$ is sometimes referred to as a \textit{bundle graph}\cite{GuptaNewmanRabinovichSinclair1999}.\\

\begin{exs}\label{Ex:2.3.1} We list three elementary examples of   geodesic $s$-$t$ graphs:
\begin{enumerate}
\item[a)] ({\em Paths}):  Let
 $P=(x_i)_{i=0}^k$ be a path. If we define $s(P)=x_0$ and $t(P)=x_k$ then it is clear that $P$ is an $s$-$t$ graph. Let $w:E(P)\to\R^+$ be a weight function, then $w$ generates a geodesic metric $d_P$ on $P$ making $(P,d_P)$ a  geodesic $s$-$t$ graph.
 
 \item[b)] ({\em Cycles}):  Let $C=(V(C),E(C))$ be a cycle, with  $V(C)=\{x_0,x_1, \ldots, x_{l-1}\}$, 
 with $l\ge 3$, such that $E(C)=\big\{ \{x_{j-1}, x_j\}: j=1,2,\ldots, l\big\}$, where $x_l=x_0$. Let 
 $d_C$ be a geodesic metric on $V(C)$,  generated by  a weight function $w:E(C)\to \R^+$ and assume that there
 is an $1\le m\le l-1$ 
 so that $$d_C(x_0,x_m)=\sum_{j=1}^m w(\{x_{j-1}, x_j\}) =\sum_{j=m+1}^{l}  w(\{x_{j-1}, x_{m-j}\}).$$
 We can then  orient $E(C)$ by
 $$E_d(C)=\{(x_{i-1},x_i): i\in \{1,\ldots, m\}\}\cup \{(x_{j},x_{j-1}):\ j\in \{m+1,\ldots, l\}\}.$$
 Then $(C,d_C)$ is a geodesic $s$-$t$ graph, with $s(C)=x_0$ and $t(C)=x_m$.
 
  We call  in that case $(C, d_C)$ a \textit{geodesic $s$-$t$ cycle}.

 \item[c)] {\em Generalized Laakso graphs}. For $k,l_1,l_2,m\in \N_0$ such that $l_1\geq l_2\geq 1$, and $l_1+l_2\ge 3$. Let $L=(V(L), E(L))$  be  defined by
 $$V(L)=\{x_i\}_{i=0}^k\cup\{ y^{(1)}_i\}_{i=0}^{l_1}\cup \{ y^{(2)}_i\}_{i=0}^{l_2}\cup \{z_i\}_{i=0}^m,$$
 where we make the following identifications:
 $$x_k\equiv y^{(1)}_0\equiv y^{(2)}_0,\text{ and }  y^{(1)}_{l_1}\equiv y^{(2)}_{l_2}\equiv z_0,$$
 \noindent and 
 \begin{align*} E(L)&=\big\{\{x_{i-1}, x_i\}: i=1,2,\ldots,k\big\} \cup \big\{\{z_{i-1}, z_i\}: i=1,2,\ldots,m\big\}\\
                               &\qquad   \cup  \big\{\{y^{(1)}_{i-1}, y^{(1)}_i\}: i=1,2,\ldots,l_1\big\}    \cup  \big\{\{y^{(2)}_{i-1}, y^{(2)}_i\}: i=1,2,\ldots,l_2\big\}.
                                  \end{align*}

We call $L$ a $(k,l_1,l_2,m)$-\textit{Laakso graph}. If $L$ is a $(k,l_1,l_2,m)$-Laakso graph for some $k,l_1,l_2,m$, then we say that $L$ is \textit{a generalized Laakso graph}. Note that in particular, a $(1,2,2,1)$-Laakso graph is the base graph for the family of standard Laakso graphs, while a $(0,2,2,0)$-Laakso graph is the base graph for the family of standard diamond graphs.  We denote $s(L)=x_0$ and $t(L)=z_m$ and note that the orientation 
\begin{align*} E_d(L)&=\big\{(x_{i-1}, x_i): i=1,2,\ldots,k\big\} \cup \big\{(z_{i-1}, z_i): i=1,2,\ldots,m\big\}\\
                               &\qquad   \cup  \big\{(y^{(1)}_{i-1}, y^{(1)}_i): i=1,2,\ldots,l_1\big\}    \cup  \big\{(y^{(2)}_{i-1}, y^{(2)}_i): i=1,2,\ldots,l_2\big\}.
                                  \end{align*}

implies that these generalized Laakso graphs are $s$-$t$ graphs.

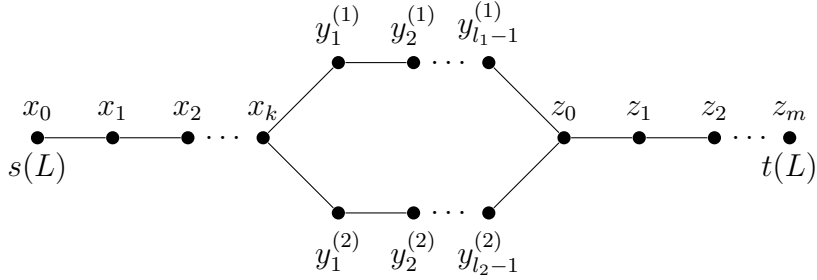
\begin{figure}[h]
\centering
\begin{tikzpicture}[roundnode/.style={circle, fill=black, inner sep=0pt, minimum size=1.75mm}]
  \node[roundnode, label=90:{$x_0$}] (x0) at (0,0) {};
        \node[roundnode, label=90:{$x_1$}] (x1) at (1,0) {};
        \node[roundnode, label=90:{$x_2$}] (x2) at (2,0) {};
        \node (p1) at (2.5,0) {\ldots};
        \node[roundnode, label=90:{$x_k$}] (xk) at (3,0) {};

        \node[roundnode, label=90:{$y_1^{(1)}$}] (y11) at (4,1) {};
        \node[roundnode, label=90:{$y_2^{(1)}$}] (y21) at (5,1) {};
        \node (p2) at (5.5,1) {\ldots};
        \node[roundnode, label=90:{$y_{l_1-1}^{(1)}$}] (yl'1) at (6,1) {};

        \node[roundnode, label=270:{$y_1^{(2)}$}] (y12) at (4,-1) {};
         \node[roundnode, label=270:{$y_2^{(2)}$}] (y22) at (5,-1) {};
        \node (p3) at (5.5,-1) {\ldots};
        \node[roundnode, label=270:{$y_{l_2-1}^{(2)}$}] (yl'2) at (6,-1) {};

        \node[roundnode, label=90:{$z_0$}] (z0) at (7,0) {};
        \node[roundnode, label=90:{$z_1$}] (z1) at (8,0) {};
        \node[roundnode, label=90:{$z_2$}] (z2) at (9,0) {};
        \node (p4) at (9.5,0) {\ldots};
        \node[roundnode, label=90:{$z_m$}] (zm) at (10,0) {};

        \node (s) at (0,-.4) {$s(L)$};
        \node (t) at (10,-.4) {$t(L)$};

            %Lines
        \draw[-] (x0) -- (x1);
        \draw[-] (x1) -- (x2);
        \draw[-] (xk) -- (y11);
        \draw[-] (xk) -- (y12);
        \draw[-] (y11) -- (y21);
        \draw[-] (y12) -- (y22);
        \draw[-] (yl'1) -- (z0);
        \draw[-] (yl'2) -- (z0);
        \draw[-] (z0) -- (z1);
        \draw[-] (z1) -- (z2);

        \end{tikzpicture}
        \caption{A $(k,l_1,l_2,m)$-Laakso Graph}
        \end{figure}

 Let $w:E(L)\rightarrow (0,1]$ be such that,
      \begin{align*}&\sum_{i=1}^k w(x_{i-1},x_i)+  \sum_{i=1}^{l_1} w(y_{i-1}^{(1)},y_i^{(1)})+\sum_{i=1}^m w(z_{i-1},z_i)   \\
                   &\qquad=\sum_{i=1}^k w(x_{i-1},x_i)+  \sum_{i=1}^{l_2} w(y_{i-1}^{(2)},y_i^{(1)})+\sum_{i=1}^m w(z_{i-1},z_i)\end{align*}
         then $w$ generates a geodesic metric $d_L$ on $L$ which turns  $(L,d_L)$ into  a  geodesic $s$-$t$ graph.
\end{enumerate}

We call a $(k,l_1,l_2,m)$-Laakso graph $L=(V(L),E(L)) $ {\em balanced } if $l_1=l_2$.
\end{exs}

An important attribute of geodesic $s$-$t$ graphs is that all cycles and geodesic $s$-$t$ subgraphs over the same two distinguished points $s,t$ are isometric subgraphs. Such a property will allow us to restrict our attention to \textit{generalized Laakso graphs} as introduced in Example \ref{Ex:2.3.1} (c).

For the next result, we introduce the following notation: If $G$ is an $s$-$t$ graph,  we call a subgraph $H$ of $G$ which contains 
$s(G)$ and $t(G)$ and which has the property that 
it is an $s$-$t$ graph with the same distinguished points $s(G)$ and $t(G)$,
an {\em  $s$-$t$ subgraph of $G$}.

\begin{lem}\label{L:2.3.2} Suppose $(G,d_G)$ is a geodesic $s$-$t$ graph with distinguished points $s(G)$ and $t(G)$.

\begin{enumerate}
  \item  Every cycle $C$ in $G$ is a subgraph of a generalized Laakso graph $L$  in $G$,
  for which $s(L)=s(G)$ and $t(L)=t(G)$ and both, $(C, d_C)$ as well as  $(L,d_L)$ are isometric subgraphs of $G$, where
  $d_C$ and $d_L$ are the geodesic metrics on $C$ and $L$, respectively, induced by $d_G$.

    \item If $H$ is  an $s$-$t$ subgraph of $G$   then  $(H,d_H)$ is an isometric subgraph of $(G,d_G)$, where $d_H$ is the geodesic metric on $H$ induced 
    by $d_G$.
 \end{enumerate}

Moreover, if $G$ is not a path, it contains cycles and, thus, also a generalized Laakso graph.
\end{lem}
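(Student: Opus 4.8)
The plan is to exploit the directed structure of a geodesic $s$-$t$ graph and the fact, recorded in Proposition \ref{P:2.3.0}, that every $s$-$t$ path is an isometric subgraph. The key observation is that any edge of $G$ lies on \emph{some} directed path from $s(G)$ to $t(G)$, so whenever we are handed a cycle $C$ or an $s$-$t$ subgraph $H$, we can "thread" each of its edges onto a global $s$-to-$t$ path and thereby compare lengths measured in $C$ (resp.\ $H$) with lengths measured in $G$. I will treat (2) first, since it is the cleaner statement and (1) will essentially be reduced to it.

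For part (2), let $H$ be an $s$-$t$ subgraph of $G$. Since every path from $s(G)$ to $t(G)$ in $G$ has the same metric length, call it $\ell$ (so $\ell=1$ in the normalized case, but we only need that the value is well-defined), and since $H$ is itself an $s$-$t$ graph with the same distinguished points, every $s$-to-$t$ path lying inside $H$ is in particular an $s$-to-$t$ path in $G$, hence has metric length $\ell$; thus $(H,d_H)$ is also a geodesic $s$-$t$ graph. Now fix $x,y\in V(H)$. Because $H$ is a directed $s$-$t$ graph, $x$ and $y$ each lie on a directed $s$-to-$t$ path in $H$; a short combinatorial argument (using that the directed edges of $H$ all respect a single orientation coming from $s$-to-$t$ directed paths) produces a \emph{single} directed $s$-to-$t$ path $Q$ in $H$ passing through both $x$ and $y$. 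Write $Q=(v_0,\dots,v_l)$ with $v_0=s(G)$, $v_l=t(G)$, $x=v_i$, $y=v_j$, $i<j$. Since $Q$ is an $s$-to-$t$ path in $G$, Proposition \ref{P:2.3.0} gives $d_G(x,y)=\sum_{r=i+1}^{j} d_G(v_{r-1},v_r)=\length_{d_G}(Q|_{[i,j]})$. The same path $Q|_{[i,j]}$ lies in $H$ and its edges carry the same weights $d_G(e)=d_H(e)$, so $\length_{d_H}(Q|_{[i,j]})=d_G(x,y)$, whence $d_H(x,y)\le d_G(x,y)$. The reverse inequality $d_H(x,y)\ge d_G(x,y)$ is immediate: any $H$-path between $x$ and $y$ is a $G$-path with the same weights, so its $d_H$-length equals its $d_G$-length and is $\ge d_G(x,y)$. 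Hence $d_H=d_G$ on $V(H)$, i.e.\ $H$ is an isometric subgraph.

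For part (1), given a cycle $C$ in $G$, I will first build the generalized Laakso graph $L\supseteq C$. Pick two vertices $p,q$ on $C$ so that $C$ is the union of two internally disjoint paths $C_1,C_2$ from $p$ to $q$; choosing $p,q$ appropriately (e.g.\ as the vertices of $C$ closest to $s(G)$ and to $t(G)$ along suitable directed paths) we may arrange that the two arcs are directed consistently. Using that every edge of $C$ lies on a directed $s$-to-$t$ path, attach a directed path $X$ from $s(G)$ to $p$ and a directed path $Z$ from $q$ to $t(G)$; then $L:=X\smile(C_1\cup C_2)\smile Z$ has exactly the combinatorial shape of a $(k,l_1,l_2,m)$-Laakso graph with $s(L)=s(G)$, $t(L)=t(G)$, and it is an $s$-$t$ subgraph of $G$. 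By part (2), $(L,d_L)$ is an isometric subgraph of $G$. Moreover the two $p$-to-$q$ arcs in $L$, concatenated with $X$ and $Z$, are both $s$-to-$t$ paths in $L$ and hence have equal length, so $(L,d_L)$ really is a geodesic $s$-$t$ graph; and $C$, sitting inside the geodesic $s$-$t$ graph $L$ over the same $p,q$ appearing in a subgraph, is seen to be isometric in $L$ (again via Proposition \ref{P:2.3.0} applied to the two arcs, which are themselves pieces of $s$-to-$t$ paths) and therefore isometric in $G$. Finally, the "moreover" clause is the standard fact that a connected graph with no cycle is a tree, and a tree with a distinguished pair of vertices realizing an $s$-$t$ structure in which every edge lies on an $s$-to-$t$ path must be a single path; contrapositively, if $G$ is not a path it has a cycle, and by what we just proved it then contains a generalized Laakso graph.

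The main obstacle I anticipate is the purely graph-theoretic step of producing, inside the directed $s$-$t$ graph $H$ (or $C$ plus its attached stalks), a \emph{single} directed $s$-to-$t$ path through two prescribed vertices, and of checking that the attachment of the stalks $X$ and $Z$ to a cycle yields a configuration that literally matches the definition of a generalized Laakso graph (in particular that $X$, $Z$, and the two arcs are pairwise internally disjoint, so that the result is a genuine subgraph and not merely a homomorphic image). Once the combinatorics is pinned down, the metric content is a one-line appeal to Proposition \ref{P:2.3.0} each time.
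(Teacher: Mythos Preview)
Your argument for part (2) contains a genuine gap at exactly the place you flagged: the claim that any two vertices $x,y\in V(H)$ lie on a \emph{single} directed $s$-to-$t$ path is simply false. Take $H$ to be the $(0,2,2,0)$-Laakso graph (the diamond), with $s=y^{(1)}_0=y^{(2)}_0$, $t=y^{(1)}_2=y^{(2)}_2$, and consider $x=y^{(1)}_1$, $y=y^{(2)}_1$. The only directed $s$-to-$t$ paths are the two arcs, and neither contains both $x$ and $y$. So no ``short combinatorial argument'' will produce such a path, and the case where $x,y$ lie on different arcs of a cycle is precisely the nontrivial one. Your proof of (1) inherits the same problem: for $u\in C_1$ and $v\in C_2$ on opposite arcs of the cycle, appealing to Proposition \ref{P:2.3.0} ``applied to the two arcs'' only controls distances \emph{within} each arc, not $d_C(u,v)$ versus $d_G(u,v)$.

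The paper handles this missing case directly and it is the real content of the lemma. For (1), with $u,v$ on opposite arcs, one assumes there is a shorter $G$-path $P$ from $u$ to $v$ avoiding $C$, and then splices $P$ (and its reverse) into the two $s$-to-$t$ paths through $u$ and through $v$ to produce two new $s$-to-$t$ walks whose total length is $2d_G(s,t)+2\,\length_{d_G}(P)$; since every $s$-to-$t$ path has length $d_G(s,t)$, this forces $\length_{d_G}(P)=0$, a contradiction. Part (2) is then reduced to (1): if $u,v\in V(H)$ do not share an $s$-to-$t$ path in $H$, one extracts a cycle in $H$ containing both and applies the isometry of cycles just proved. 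In short, your reduction of (1) to (2) is backwards relative to what the argument actually needs; the cycle isometry must be established first, by the length-comparison contradiction, and (2) follows from it.
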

\begin{proof}
Note that as every vertex of $G$ is  on an $s$-$t$ path, if $G$ is a tree, then it must simply be an $s$-$t$ path. 
This proves the ``moreover'' part of our claim. 

\noindent Proof of (1). Let $C\subset G$ be an arbitrary cycle. Label the vertices of $C$ by $C=(x_0,x_1,\ldots, x_n=x_0)$ and suppose that $d_C$ is the induced geodesic metric of $d_G$ on $C$.
We choose $y$ and $z$ in $ V(C)$ such that 
$$d_G(y,s(G))=\min_{x\in V(C)} d_G(x,s(G)), \text{ and } d_G(z,t(G))=\min_{x\in V(C)} d_G(x,t(G)),$$
and observe that $y\not=z$. Indeed, assume that $y=z$. Let $y'\in V(C)$ so that $\{y,y'\}\in E(C)\subset E(G)$, and let $P=(x_j)_{j=0}^n$ be a path between $s(G)$ and $t(G)$ containing $\{y,y'\}$.
It follows that $\length_{d_G}(P)=d_G(s(G), t(G))$. We assume without loss of generality  that  $x_{i_1}=y$ and $x_{i_2}=y'$, with $0<i_1<i_2<n$ 
(if $i_2<i_1$ we swap $s(G)$ with $t(G)$). Then
\begin{align*}
d_G(s(G),y)+d_G(y, t(G))&\ge d_G(s(G), t(G))
=\length_{d_G}(P)\\
&=d_G(s(G),y)+d_G(y,y')+d_G(y', t(G))\\
&> d_G(s(G),y)+d_G(y', t(G))\ge d_G(s(G),y)+d_G(y, t(G)),
\end{align*}
where the last inequality follows from the minimality conditions on $y=z$.
This is a contradiction; thus, we conclude that $y\not=z$.

After relabeling the vertices of $C$ we can assume that $y=x_0$ and $z=x_{l_1}$, for some $0<l_1<n$.
We let $A=(a_j)_{j=0}^k$ be a path between $a_0=s(G)$ and $a_k=x_0=y$, of shortest metric length, 
$B=(b_j)_{j=0}^m$ be a path between $b_0=x_{l_1}=z$ and $b_m=t(G)$, of shortest metric length,
and let $Q^{(1)}=(x_j)_{j=0}^{l_1}$ and $Q^{(2)}=(x_{n-j})_{j=0}^{n-l_1}$ (both being paths from $y$ to $z$, and together forming the cycle $C$). Since $V(A)\cap V(Q^{(1)})=V(A)\cap V(Q^{(2)})=\{x_0\}$ and $V(B)\cap V(Q^{(1)})=V(B)\cap V(Q^{(2)})=\{x_{l_1}\}$
it follows  that the graph  $L$
with $V(L)=\{a_j\}_{j=0}^k\cup\{ x_j\}_{j=0}^{l_1} \cup \{x_{n-j}\}_{j=0}^{n-l_1}\cup \{b_j\}_{j=0}^m$ and $E(L)=E(A)\cup E(Q^{(1)})\cup E(Q^{(2)})\cup E(B)$
is a $(k,l_1,n-l_1,m)$- Laakso graph. Let $d_L$ and $d_C$ be the induced geodesic metrics on $V(L)$ and $V(C)$, respectively.
We need to show that they coincide with the restriction of $d_G$ to $V(L)$ and $V(C)$, respectively.

 We define $P^{(1)}= A\smile Q^{(1)} \smile B$ and  $P^{(2)}= A\smile Q^{(2)} \smile B$, and recall that by Proposition \ref{P:2.3.0} they are both isometric subpaths of $(G, d_G)$ as well  as  $(L,d_L)$. Since they have the same length it follows that $\length_{d_G}(Q^{(1)})=\length_{d_G}(Q^{(2)})$.
 
  Let $u,v\in V(L)$. We need to show that $d_L(u,v)= d_G(u,v)$, and in case that $u,v\in V(C)$, also  that $d_C(u,v)= d_G(u,v)$.
We first consider the case that  $u,v$ are both in $P^{(i)}$  for some $i=1,2$.
Then it follows that $d_G(u,v)$ and $d_L(u,v)$ are both equal to the induced metric on $P^{(i)}$, and, thus, equal to each other. If moreover  $u,v\in V(C)$, and  thus  $u,v\in V(Q^{(i)})$,
the path of shortest length in $C$ between $x$ and $y$ (with respect to $d_C$!) will be inside $Q^{(i)}$. Thus, again by  Proposition \ref{P:2.3.0}, it follows that $d_C(u,v)= d_L(u,v)=d_G(u,v)$.

If  $u,v$ are not  both in $P^{(i)}$, we can assume that $u=x_{i_1}$ for some $1<i_1<l_1$ and $v=x_{i_2}$ for some $l_1<i_2<n$.
Let us assume that $d_C(u,v)\not=d_G(u,v)$, 
and thus since always  $d_C(u,v)\ge d_G(u,v)$, that  $d_C(u,v)> d_G(u,v)$. This means that there is a path $P=(w_j)_{j=0}^n$
in $G$ from $w_0=x_{i_1}$ to $w_n=x_{i_2}$ in $G$ whose metric length is smaller than $d_C(u,v)$.
We can also assume that $u\in V(Q^{(1)})$ and $v\in V(Q^{(2)})$ are chosen, so that $n$ is minimal,
which implies that $w_j\in V(G)\setminus V(C)$ for $j=1,2,\ldots, n-1$. Let $P'=(w_{n-j})_{j=0}^n$, 
the path from $x_{i_2}$ to $x_{i_1}$, which reverses $P$. 

It follows that 
\begin{align*} &R^{(1)}= A\smile (x_j)_{j=0}^{i_1}\smile P \smile (x_{i_2-j})_{j=0}^{i_2-l_1} \smile B
\text{ and }\\ &R^{(2)}= A\smile (x_{n-j})_{j=0}^{n-i_2}\smile P' \smile (x_{j})_{j=i_1}^{l_1} \smile B\end{align*}
are both paths from $s(G)$ to $t(G)$ and therefore of  metric length $d_G(s(G),t(G))$.
But on the other hand we have, by Proposition \ref{P:2.3.0}
\begin{align*}
\length_{d_G}(R^{(1)})+\length_{d_G}(R^{(2)})=& 2\length_{d_G}(A) + 2\length_{d_G}(B) + 2\length_{d_G}(P)\\
 &+\length_{d_G}\big((x_j)_{j=0}^{i_1}\big)+\length_{d_G}\big((x_j)_{j=i_1}^{l_1}\big)\\
 &+\length_{d_G}\big((x_{n-j})_{j=0}^{n-i_2}\big)+\length_{d_G}\big((x_{i_2-j})_{j=0}^{i_2-l_1}\big)\\
 &=2 d_G(s(G),t(G))+  2\length_{d_G}(P),
\end{align*}
which would mean that $\length_{d_G}(P)=0$, and is thus a contradiction. We deduce that $d_C(u,v)=d_G(u,v)$
and, thus, since $d_G(u,v)\le d_L(u,v)\le d_C(u,v)$, also $d_L(u,v)=d_C(u,v)$.

\noindent Proof of (2). Assume that $H=(V(H),E(H))$ is a subgraph of $G$, which is an $s$-$t$ graph with $s(H)=s(G)$.
 Let $d_H$, be  the  geodesic metric on $V(H)$, induced by $d_G$.
 Since every path in $H$ is a path in $G$, it follows that $H$ together with $d_H$  is a geodesic $s$-$t$ graph. 
 Thus, if   $u,v\in V(H)$ lie on the same path  $Q$ in $H$ from $s(G)$ to $t(G)$ that  is also a path in $G$,
  then $d_G(u,v)=d_Q(u,v) =d_H(u,v)$.
  If  for $u,v\in V(H)$ there is no path in $H$ which contain $u$ and $v$, we can find two paths
  from $s(G)$ to $t(G)$ in $H$,  $Q^{(1)}=(x_i)_{i=0}^{l_1}$ and $Q^{(2)}=(y_j)_{j=0}^{l_2}$, so that  $Q^{(1)}$ contains $u$ but not  $v$ and
  $Q^{(2)}$ contains $v$ but not $u$. Let  $i_1\in\{1,2,\ldots,l_1-1\}$ and $j_1=\{1,2, \ldots,l_2-1\}$ such that $x_{i_1} =u$ and $y_{j_1}=v$.
  We define
  \begin{align*}
  i_0&=\max \big\{0\le i<i_1:x_i\in \{y_j\}_{j=0}^{j_1-1}\big\} \text{ and } j_0=\max \big\{0\le j<j_1:y_j\in \{x_i\}_{i=0}^{i_1-1}\big\}
  \intertext{and}
  i_2&=\min\big\{ i_1<i\le l_1: x_i\in \{y_j\}_{j=j_1+1}^l\big\} \text{ and } j_2=\min\big\{j_1<j\le l_2: y_j\in \{x_i\}_{i=i_1+1}^k\big\}.
  \end{align*}
Then it follows that $x_{i_0}=y_{j_0}$ and $x_{i_2}=y_{j_2}$, and thus
$$C=(x_{i_0}, x_{i_0+1},\ldots, x_{i_2}=y_{j_2}, y_{j_2-1}, y_{j_2-2}, \ldots y_{j_0}=x_{i_0} )$$
is a cycle in $H$.  Denoting by $d_C$ the  geodesic metric on $C$ induced by $G$,
which is the same metric induced by $d_H$ (because they are both generated by the same weights), we deduce that from our previous work that 
$d_H(u,v)= d_C(u,v)=d_G(u,v)$.
 \end{proof}

 \medskip\noindent 2.4. {\bf Measured geodesic $s$-$t$ graphs.}\label{SS:2.4}
A triple  $(G,d_G,\nu_G)$, with $(G,d_G)$ being a  geodesic $s$-$t$ graph, and $\nu_G$
being a probability measure on $E(G)$,  is called a {\em measured  geodesic $s$-$t$ graph}.

\begin{exs}\label{Ex:2.4.1} For the three elementary geodesic  $s$-$t$  graphs, introduced in Examples \ref{Ex:2.3.1},
we choose the following probabilities on their edges.
\begin{enumerate}
\item[a)] Let $P=(x_j)_{j=0}^k$ be a path with a geodesic  metric $d_P$ generated by a weight function $w:E(P)\to \R^+$.
Then put $$\nu_P(e)=\frac{w(e)}{\sum_{j=1}^k w(\{x_{j-1},x_j\})}=\frac{d_P(e)}{d_P(s(P),t(P))}. $$
\item[b)]  Let $C=(x_0,x_1,\ldots, x_n=x_0)$  together with a geodesic metric  $d_C$ a geodesic $s$-$t$ cycle, and let $1\le m\le n-1$
be such that 
\begin{equation*}d_C(x_0,x_m)=\sum_{j=1}^m d_C(x_{j-1},x_j)=\sum_{j=m+1}^n d_C(x_{j-1},x_j)=\frac12 \sum_{j=1}^n d_C(x_{j-1},x_j).\end{equation*}
and let $s(C)=x_0$, $t(C)=x_m$. Then we put 
$$\nu_C(e)=\frac12 \frac{ d_C(e)}{d_C(x_0,x_m)}.$$
\item[c)] Let $L=(V(L), E(L))$ be a $(k,l_1,l_2,m)$-Laakso graph,
with $$V(L)=\{x_i\}_{i=0}^k\cup\{ y^{(1)}_i\}_{i=0}^{l_1}\cup \{ y^{(2)}_i\}_{i=0}^{l_2}\cup \{z_i\}_{i=0}^m,$$
where
 $$x_k\equiv y^{(1)}_0\equiv y^{(2)}_0,\text{ and }  y^{(1)}_{l_1}\equiv y^{(2)}_{l_2}\equiv z_0.$$
 \begin{align*} E(L)&=\big\{\{x_{i-1}, x_i\}: i=1,2,\ldots,k\big\} \cup \big\{\{z_{i-1}, z_i\}: i=1,2,\ldots,m\big\}\\
                               &\qquad   \cup  \big\{\{y^{(1)}_{i-1}, y^{(1)}_i\}: i=1,2,\ldots,l_1\big\}    \cup  \big\{\{y^{(2)}_{i-1}, y^{(2)}_i\}: i=1,2,\ldots,l_2\big\}.
                                  \end{align*}
                               For a weight function $w:E(L)\rightarrow\R^+$ satisfying 
      \begin{align*}&\sum_{i=1}^k w(x_{i-1},x_i)+  \sum_{i=1}^{l_1} w(y_{i-1}^{(1)},y_i^{(1)})+\sum_{i=1}^m w(z_{i-1},z_i)   \\
                   &\qquad=\sum_{i=1}^k w(x_{i-1},x_i)+  \sum_{i=1}^{l_2} w(y_{i-1}^{(2)},y_i^{(1)})+\sum_{i=1}^m w(z_{i-1},z_i)\end{align*}
                   which generates a geodesic metric $d_L$
turning $L$ into a geodesic $s$-$t$ graph, with $s(L)=x_0$ and $t(L)=z_m$  we define $\nu_L$ by
 \begin{align*}
 \nu_L(\{x_{i-1}, x_i\}) &= \frac{d_L(x_{i-1},x_i)}{d_L(s(L),t(L))} \text{ for $i=1,2\ldots k$,}\\
 \nu_L(\{ y^{(1)}_{i-1}, y^{(1)}_i\})&=\frac12\frac{d_L(y^{(1)}_{i-1},d_L(y^{(1)}_i))}{d_L(s(L),t(L))} \text{ for $i=1,2\ldots l_1$,}\\
  \nu_L(\{ y^{(2)}_{i-1}, y^{(2)}_i\})&=\frac12\frac{d_L(y^{(2)}_{i-1},d_L(y^{(2)}_i))}{d_L(s(L),t(L))} \text{ for $i=1,2\ldots l_2$,}\\
\nu_L(\{z_{i-1}, z_i\}) &= \frac{d_L(z_{i-1},z_i)}{d_L(s(L),t(L))} \text{ for $i=1,2,\ldots,m$}.
\end{align*}
\end{enumerate}

\end{exs}

\noindent 2.5. \textbf{Expected Distortion} Assume that $G$ and $H$ are simple, connected graphs and $d_G$,$d_H$ are geodesic metrics on $G$ and $H$, respectively. If $\nu_G$ is a probability on $E(G)$ and $f: V(G)\to V(H)$ then  the {\em expected distortion of   $f$ with respect to $(G,d_G,\nu_G)$} is defined by
$$\D_{\nu} (f)=\E_\nu\Big(\frac{d_H\circ f}{d_G}\Big)=\sum_{e\in E(G)} \frac{d_H(f(e))}{d_G(e)} \nu(e).$$

\noindent 2.6. \textbf{Stochastic Embeddings} Let $\cM$ be a class of metric spaces, and let  $(X,d_X)$  be a  metric space.
  A family $(f_i)_{i=1}^n$ of maps $f_i:X\to M_i$, with $(M_i,d_i)\in\cM$, together with numbers $\P=(p_i)_{i=1}^n\subset[0,1]$, such that $\sum_{i=}^np_i=1$,    is called a 
  {\em $D$-stochastic embedding of $X$ into elements of the class  $\cM$} if  for all $x,y\in X$ and $i=1,\ldots,n$
  \begin{align}
   &d_X(x,y)\le  d_i(f_i(x),f_i(y)), \notag\\
   &\E_\P\big(d_i(f_i(x),f_i(y)\big)=\sum_{i=1}^n p_i d_i\big(f_i(x),f_i(y) \big)\le D d_X(x,y). \notag
  \end{align} 
  In that case we say that $(X,d_X)$  is {\em $D$-stochastically  embeddable  into   $\cM$}. We denote
  $$\cS_{\cM}(X,d_X):=\inf\{D\geq 1:\ (X,d_X)\ \text{is}\ D-\text{stochastically embeddable into}\ \cM\}$$
  \noindent and call the value $\cS_{\cM}(X,d_X)$ \textit{the stochastic distortion of} $(X,d_X)$ into $\cM$.

By following the same ideas as in \cite{AlonKarpPelegWest1995} one could deduce from the Min-Max Theorem 
that 
$$\cS_\cM(X,d_X)= \sup_\nu \inf_{(f,M)} \sum_{x,y\in X, x\not=y} \nu(\{x,y\}) \frac{d_M(f(x),f(y))} {d_X(x,y)}, $$
where the $\sup$ is taken over all probabilities on the doubletons of $X$ with finite support,
and the $\inf$ is take over all pairs $(f,M)$ with $(M,d_M)$ being in the class $\cM$ and $f: (X,d_X)\to (M, d_M)$ being expansive.
We will only need that the left side is not smaller than the right side, and this only for geodesic graphs, a result 
which is much more elementary and will be observed in the next Proposition.

\begin{prop}\label{P:2.6.1}  Let $(G,d_G)$,  be a geodesic  graph, and let $\nu_G$ be a probability on $E(G)$. Suppose that $\cM$ is a family of geodesic graphs.

If $(H_i, d_{H_i})\in \cM$, for $i=1,2,\ldots,n$,
and
if $(f_i)_{i=1}^n$, $f_i:V(G)\to V(H_i)$, together with $\P=(p_i)_{i=1}^n\subset (0,1]$ forms a $D$-stochastic embedding 
of $G$ into $(H_i)_{i=1}^n$,
then $D\ge c_\nu(G,d_G)$, where 
$$c_\nu(G,d_G):=
\inf \left\{ \D_{\nu} (f)\Big\vert  (H,d_H)\in \cM,
                                        f:(V(G),d_G)\to (V(H), d_H)\text{ expansive} \right\},$$
and, thus, 
$$D\ge  c(G,d_G):=\max\big\{ c_\nu(G,d_G): \nu\text{ is a probability on $E(G)$}\big\}.$$
\end{prop}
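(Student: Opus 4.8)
The plan is to combine the two defining inequalities of a $D$-stochastic embedding with the linearity of the expected distortion $\D_\nu(\cdot)$ in the map, by a convex-combination (``averaging'') argument; the statement about $c(G,d_G)$ will then follow by varying $\nu$. Fix the probability $\nu$ on $E(G)$ together with the data $(f_i)_{i=1}^n$, with $(H_i,d_{H_i})\in\cM$ for each $i$, and $\P=(p_i)_{i=1}^n$ of the given $D$-stochastic embedding.

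First I would record that the first inequality in the definition of a $D$-stochastic embedding says exactly that each $f_i\colon(V(G),d_G)\to(V(H_i),d_{H_i})$ is expansive; in particular $f_i(u)\neq f_i(v)$ whenever $u\neq v$, so every quotient $d_{H_i}(f_i(e))/d_G(e)$ appearing in $\D_\nu(f_i)$ is well defined (note $d_G(e)=d_G(u,v)>0$ for an edge $e=\{u,v\}$). Since $(H_i,d_{H_i})\in\cM$, the pair $(H_i,f_i)$ lies in the set over which the infimum defining $c_\nu(G,d_G)$ is taken, whence $\D_\nu(f_i)\ge c_\nu(G,d_G)$ for every $i$. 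Multiplying by $p_i$, summing over $i$, and using $\sum_i p_i=1$ yields
\[
\sum_{i=1}^n p_i\,\D_\nu(f_i)\ \ge\ c_\nu(G,d_G).
\]

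Next I would bound the same quantity from above. Expanding $\D_\nu(f_i)=\sum_{e\in E(G)}\frac{d_{H_i}(f_i(e))}{d_G(e)}\,\nu(e)$ and interchanging the two finite sums gives
\[
\sum_{i=1}^n p_i\,\D_\nu(f_i)=\sum_{e\in E(G)}\frac{\nu(e)}{d_G(e)}\sum_{i=1}^n p_i\,d_{H_i}(f_i(e)),
\]
and for an edge $e=\{u,v\}$ the second inequality in the definition of a $D$-stochastic embedding gives $\sum_{i=1}^n p_i\,d_{H_i}(f_i(u),f_i(v))\le D\,d_G(u,v)=D\,d_G(e)$. Since each weight $\nu(e)/d_G(e)$ is non-negative, substituting this and using that $\nu$ is a probability on $E(G)$ yields $\sum_{i=1}^n p_i\,\D_\nu(f_i)\le D\sum_{e\in E(G)}\nu(e)=D$, and combining with the previous display gives $c_\nu(G,d_G)\le D$. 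This proves the first assertion; since it holds for every probability $\nu$ on $E(G)$ we get $D\ge\sup_\nu c_\nu(G,d_G)$, and this supremum is attained (so that writing $\max$ is legitimate) because $E(G)$ is finite, hence the probabilities on $E(G)$ form a compact simplex, and $\nu\mapsto c_\nu(G,d_G)$ is upper semicontinuous, being an infimum of the affine maps $\nu\mapsto\D_\nu(f)$.

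I do not expect a genuine obstacle here: the core of the argument is the Fubini/convexity manipulation above, and the only points needing care are the well-definedness of the quotients $d_{H_i}(f_i(e))/d_G(e)$ (handled by expansiveness) and the non-emptiness of the admissible set in the definition of $c_\nu$ (witnessed by the given embedding itself); the compactness remark justifying $\max$ is routine and could be omitted if one simply reads $\max$ as $\sup$.
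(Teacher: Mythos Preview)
Your proof is correct and follows essentially the same approach as the paper: both compute $\sum_{i=1}^n p_i\,\D_\nu(f_i)$, bound it below by $c_\nu(G,d_G)$ using that each $f_i$ is expansive, and bound it above by $D$ via the second stochastic-embedding inequality together with a Fubini swap of the finite sums. The paper presents this as a single descending chain of inequalities starting from $D$, whereas you sandwich the quantity from both sides, but the content is identical; your added remark on upper semicontinuity justifying the $\max$ is a small bonus the paper omits.
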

\begin{proof} We observe for a probability $\nu$ on $E(G)$ that 
\begin{align}
D&\ge \max_{x,y\in V(G), x\not= y}\frac{\sum_{i=1}^n p_i d_{H_i}(f_i(x),f_i(y))}{d_G(x,y)}\notag\\
&\ge \max_{e=\{x,y\}\in E(G)} \frac{\sum_{i=1}^n p_i d_{H_i}(f_i(x),f_i(y))}{d_G(x,y)}\notag\\
&\ge\sum_{e=\{x,y\}\in E(G)} \frac{\nu(e)}{d_G(x,y)}\sum_{i=1}^n p_i d_{H_i}(f_i(x),f_i(y))\notag\\
&= \sum_{i=1}^n p_i \sum_{e=\{x,y\}\in E(G)} \frac{\nu(e)}{d_G(x,y)}d_{H_i}(f_i(x),f_i(y))\notag \\
&=\sum_{i=1}^n p_i\D_{\nu} (f_i) \ge c_\nu(G,d_G).\notag 
\end{align}
\end{proof}

If $(X,d_X)$ is a finite metric space with $|X|=n$, then by a result of Fakcharoenphol, Rao, and Talwar a stochastic embeddings with $O(\log(n))$ stochastic distortion into the family of geodesic trees always exists.

\begin{thm}\label{T:2.6.1}\cite{FakcharoenpholRaoTalwar2004}
    If $(X,d_X)$ is a finite metric space with $|X|=n$ and $\cT$ denotes the family of weighted trees with the corresponding induced geodesic metrics, then
    $$S_{\cT}(X,d_X)=O(\log(n))$$
\end{thm}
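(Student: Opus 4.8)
The plan is to reproduce the Fakcharoenphol--Rao--Talwar randomized hierarchical decomposition, which produces a distribution over weighted trees whose geodesic metrics dominate $d_X$ and whose expected stretch on every pair is $O(\log n)$; since each tree produced lies in $\cT$, this is exactly an $O(\log n)$-stochastic embedding of $(X,d_X)$ into $\cT$. After rescaling we may assume $\min_{x\ne y}d_X(x,y)=1$, and we let $\Delta=2^{\delta}$ be the least power of $2$ with $\Delta\ge \diam(X,d_X)$, so all nonzero distances lie in $[1,\Delta]$ and we have scales $i=0,1,\ldots,\delta$. Draw $\beta$ uniformly from $[1,2)$ and a uniformly random bijection $\pi\colon\{1,\ldots,n\}\to X$, and set $r_i=\beta\,2^{i-1}$, so that $r_i$ is uniform on an interval of length $2^{i-1}$ and $r_0<1$. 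Build nested partitions $\cP_\delta,\cP_{\delta-1},\ldots,\cP_0$, each refining its predecessor, from the top down: $\cP_\delta=\{X\}$, and to refine a block $B\in\cP_{i+1}$ one scans $\pi(1),\pi(2),\ldots$ and lets each $\pi(j)$ claim for its new subblock every not-yet-claimed point of $B$ within $d_X$-distance $r_i$ of $\pi(j)$; since $r_0<1\le d_X$, the bottom partition $\cP_0$ consists of singletons. The laminar family $\bigcup_i\cP_i$ is the vertex set of a rooted tree $T$ with root $X$, with leaves the singletons (identified with $X$), and with an edge of weight $c\,2^{i}$ from each block of $\cP_i$ to the block of $\cP_{i+1}$ above it; equip $T$ with the geodesic metric $d_T$ generated by these weights, so $T\in\cT$. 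A short check shows that a common block of $\cP_i$ has $d_X$-diameter less than $2r_i<2^{i+1}$ (both points lie within $r_i$ of the point that claimed the block), while two points lying in a common block of $\cP_{i+1}$ but in distinct blocks of $\cP_i$ are at $T$-distance $\Theta(2^{i})$; hence for a suitable absolute constant $c$ one has $d_T(x,y)\ge d_X(x,y)$ for all $x,y$, i.e.\ $T$ dominates $(X,d_X)$.

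For the expected-stretch bound, fix $x\ne y$ and write $D=d_X(x,y)$. Call the pair \emph{separated at level $i$} if $x$ and $y$ lie in a common block of $\cP_{i+1}$ but in distinct blocks of $\cP_i$; then $d_T(x,y)=O(2^{i})$, so
\[
\E\big[d_T(x,y)\big]\ \le\ \sum_{i=0}^{\delta} O(2^{i})\cdot \Pr\big[x,y\text{ separated at level }i\big].
\]
Order the points of $X$ as $z_1,\ldots,z_n$ by nondecreasing value of $\min\{d_X(z_\ell,x),d_X(z_\ell,y)\}$, and call $\ell$ the rank of $z_\ell$. The level-$i$ separation of $x$ and $y$ is decided by the $\pi$-first point within $r_i$ of $\{x,y\}$, and for $z_\ell$ to be that deciding point while also separating the pair it must (a) precede in $\pi$ every point of rank $\le\ell$, an event of probability at most $1/\ell$ since the $\pi$-first of $z_1,\ldots,z_\ell$ is uniform among them; and (b) have $r_i$ fall in the ``cutting window'' $\big[\min\{d_X(z_\ell,x),d_X(z_\ell,y)\},\,\max\{d_X(z_\ell,x),d_X(z_\ell,y)\}\big)$, which has length at most $D$ by the triangle inequality. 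Now the geometric weight $O(2^{i})$ at scale $i$ is cancelled by the density $1/2^{i-1}$ of $r_i$, so the contribution of $z_\ell$ at scale $i$ is $O(1/\ell)$ times the length of the intersection of its cutting window with the range $[2^{i-1},2^i)$ of $r_i$; since these ranges are disjoint across $i$ and the window has total length $\le D$, summing over $i$ gives a contribution $O(D/\ell)$ for $z_\ell$, and summing over $\ell$ gives
\[
\E\big[d_T(x,y)\big]\ \le\ O(1)\cdot D\cdot\sum_{\ell=1}^n \frac1\ell\ =\ O(D\log n).
\]
Together with domination, this shows $(X,d_X)$ is $O(\log n)$-stochastically embeddable into $\cT$, i.e.\ $\cS_\cT(X,d_X)=O(\log n)$.

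The delicate step is the simultaneous estimate combining (a) and (b): one must argue that the event ``$z_\ell$ decides the level-$i$ separation of $x$ and $y$ and separates them'' forces both that $z_\ell$ precedes in $\pi$ all points at least as close to $\{x,y\}$ (the source of the $1/\ell$, hence of the harmonic number $H_n$ rather than something larger) and that $\beta$ lands in a sub-interval of length $\Theta(D)$ of its range at scale $i$ (the source of the $O(D/2^i)$), and that no double counting occurs when the resulting bound is summed over scales $i$ and ranks $\ell$ — it is precisely the telescoping of $\sum_i |W_\ell\cap[2^{i-1},2^i)|$ to $|W_\ell|\le D$ that collapses the double sum to $O(D\log n)$ rather than to the $O(D\log^2 n)$ of Bartal's original argument. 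One also has to verify that the top-down, block-by-block processing really does make the partitions $\cP_i$ nested. We remark that the construction in fact outputs $2$-hierarchically well-separated trees (edge weights decay geometrically toward the leaves), so the conclusion holds with $\cT$ replaced by this smaller class.
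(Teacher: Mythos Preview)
Your proposal is a correct sketch of the Fakcharoenphol--Rao--Talwar argument. Note, however, that the paper does not prove this theorem at all: it is simply quoted as a known result with the citation \cite{FakcharoenpholRaoTalwar2004}, and is used only as a black box to supply the upper bound in the Main Theorem. So there is nothing in the paper to compare your argument against; you have supplied a proof where the paper gives none.

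Two small remarks on the fit with the paper's framework. First, the paper's definition of $D$-stochastic embedding (Subsection~2.6) requires a \emph{finite} family $(f_i)_{i=1}^n$ with probabilities $(p_i)$, whereas your construction draws $\beta$ from the continuous interval $[1,2)$. This is harmless: since the edge weights $c\,2^i$ do not depend on $\beta$, the tree is determined by the partitions $\cP_i$, and these change only when some $r_i$ crosses one of the finitely many values $d_X(z,w)$; hence the distribution on trees has finite support and fits the paper's definition. Second, the map $f$ into each tree is the identification of $X$ with the leaves of $T$, which you mention; it would be worth saying explicitly that this is the expansive map required by the definition.

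Your handling of the delicate step is accurate: the independence of $\pi$ and $\beta$ lets you multiply the bounds from (a) and (b), and the key observation that $r_i\in W_\ell$ forces $r_i\ge\min\{d_X(z_\ell,x),d_X(z_\ell,y)\}$, hence all $z_1,\ldots,z_{\ell-1}$ are within $r_i$ of $\{x,y\}$, is exactly what justifies the $1/\ell$ bound conditionally on (b). The telescoping $\sum_i |W_\ell\cap[2^{i-1},2^i)|\le |W_\ell|\le D$ is the step that improves Bartal's $O(\log^2 n)$ to $O(\log n)$, as you note.
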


 Thus, in light of Theorem \ref{T:2.6.1}, in proving the Main Theorem it suffices to give a logarithmic lower bound for slash powers of geodesic $s$-$t$ graphs that contain a cycle. 
 We accomplish this by calculating the expected distortion with respect to a probability defined on the edges.

\noindent We shall utilize Proposition \ref{P:2.6.1} in the case that $\cM$ is the family of geodesic trees. \\

\medskip\noindent 2.7. {\bf Slash Products  of $s$-$t$ Graphs}\label{SS:2.7}
(cf.  \cite{LeeRaghavendra2010}).
Assume $G$ and $H$ are directed $s$-$t$ graphs (\ie\  directed graphs whose edges are oriented in a way so that every edge is on a directed path from $s(G)$ to $t(G)$ and $s(H)$ to $t(H)$ respectively).

The vertices of $H\oslash G$ are defined by

$$V(H\oslash G)=\{ (e,v): e\in E_d(H), v\in V(G)\},$$ where we identify
  the elements $(e,t(G))$ and $(\tilde e,s(G))$, for $e,\tilde e\in E_d(H)$, for which $e^+=\tilde e^-$.
  We also consider $V(H)$ to be a subset of $V(H\oslash G)$, by making 
  for $u\in V(H)$ the following identification:
  \begin{align*}
   &u \equiv  (e,s(G)),  \text{ whenever $\tilde e \in E_d(H)$, for which $e^-=u$, and }\\
  &u\equiv (\tilde e,t(G)), \text{ whenever $\tilde e \in E_d(H)$, for which $\tilde e^+=u$. }
  \end{align*}
  In particular
  \begin{align*}
  &s(H)\equiv (e,s(G)),  \text{ whenever $e\in E_d(H)$, with $e^-=s(H)$, and }\\
  &t(H)\equiv (\tilde e,t(G)),  \text{ whenever $\tilde e\in E_d(H)$, with $\tilde e^+=t(H)$.} \end{align*}

\noindent  The directed edges of $H\oslash G$ are defined by:
 \begin{align*}E_d(H\oslash G)&=\big\{ \big( (e,f^-),(e,f^+)\big): e\in E_d(H), f\in E_d(G)\big\}\end{align*}
 the undirected edges 
 by 
  \begin{align*}E(H\oslash G)&=\big\{ \big\{ (e,f^-),(e,f^+)\big\}:e\in E(H), f\in E_d(G)\big\}\\
  &=\big\{ \big\{(e,u),(e,v)\big\}:e\in E(H), \{u,v\}\in E(G)\big\}.\end{align*}
 We abbreviate for $e\in E_d(H)$, $f\in E_d(G)$, the edge $\big( (e,f^-),(e,f^+)\big)$ (of $H\oslash G$), by
$e\oslash f$, and for $v\in V(G)$ the vertex $(e,v)$ by $e\oslash v$.

\begin{rem} Note that for $e\in E(H)$, 
$$e\oslash G= ( e\oslash V(G), e\oslash E(G))=\big(\{(e\oslash v): v\in V(G)\}, \{(e\oslash f): f\in E(G)\}\big)$$
is a subgraph of $H\oslash G$, which is graph isomorphic to $G$, via the embedding
$\psi_e:V(G)\to V(H\oslash G),\quad v\mapsto e\oslash v$.
\end{rem}

Therefore, on may think of $H\oslash G$ to be obtained by replacing each edge of $H$ by a copy of $G$. More formally,
if $H$ is a directed  graph $e\in E_d(H)$ and if $G$ is an $s$-$t$ graph, then we mean by the graph
 {\em obtained by replacing the edge $e$  of $H$ by a copy of $G$} the graph
 $H{^e\hskip-7pt-}G$, defined by
 \begin{align*}
 V(H{^e\hskip-7pt-}G)&=V(H)\cup V(G),\\ &\quad \text{ where we identify $e^-\equiv s(G)$ and $e^+\equiv t(G)$,}\\
 E(H{^e\hskip-7pt-}G)&= \big(E(H)\setminus\{e\}\big)\cup  E(G).\end{align*}

 Using the identification of vertices    of  a graph $H$ with elements of a slash product $H\oslash G$ we observe the following proposition.
   \begin{prop}\label{P: 2.7.1} If $H$ and $G$ are $s$-$t$ graphs then 
 $H\oslash G$ is also  an $s$-$t$ graph, with $s(H\oslash G)= s(H)$ and  
  $t(H\oslash G)= t(H)$. 
  \end{prop}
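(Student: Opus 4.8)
The plan is to verify directly from the definitions that $H\oslash G$ is a connected graph admitting an orientation in which every edge lies on a directed path from $s(H)$ to $t(H)$, using the two distinguished vertices $s(H)\equiv(e,s(G))$ and $t(H)\equiv(\tilde e,t(G))$ (which are well-defined vertices of $H\oslash G$ by the stated identifications). The key observation is that the natural orientation $E_d(H\oslash G)$ already given in the construction — namely orienting each $e\oslash f$ from $(e,f^-)$ to $(e,f^+)$ — is exactly the one we want, so the whole content is to exhibit, for an arbitrary directed edge $e\oslash f$ of $H\oslash G$, a directed path from $s(H)$ to $t(H)$ through it.

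First I would fix a directed edge $e\oslash f$ with $e\in E_d(H)$, $f\in E_d(G)$. Since $G$ is an $s$-$t$ graph, $f$ lies on a directed path $Q=(v_0,v_1,\ldots,v_q)$ in $G$ from $v_0=s(G)$ to $v_q=t(G)$; applying the isomorphism $\psi_e\colon v\mapsto e\oslash v$ gives a directed path $e\oslash Q=(e\oslash v_0,\ldots,e\oslash v_q)$ in $H\oslash G$ from $e\oslash s(G)$ to $e\oslash t(G)$ that passes through $e\oslash f$. Next, since $H$ is an $s$-$t$ graph, $e$ lies on a directed path $R=(u_0,\ldots,u_r)$ in $H$ from $u_0=s(H)$ to $u_r=t(H)$, say $e=(u_{j-1},u_j)$. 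Under the identifications, $e\oslash s(G)\equiv u_{j-1}$ and $e\oslash t(G)\equiv u_j$. For each remaining edge $e'=(u_{i-1},u_i)$ of $R$ I pick any directed $s(G)$-to-$t(G)$ path $Q^{(i)}$ in $G$ (for instance a shortest directed $s$-$t$ path, which exists since $G$ is an $s$-$t$ graph) and form $e'\oslash Q^{(i)}$, a directed path in $H\oslash G$ from $u_{i-1}$ to $u_i$. Concatenating $e'\oslash Q^{(1)}\smile\cdots\smile e\oslash Q\smile\cdots\smile e'\oslash Q^{(r)}$ — the concatenations being legitimate because consecutive pieces meet exactly at the shared vertex $u_i$ — yields a directed walk from $s(H)$ to $t(H)$ through $e\oslash f$; pruning it to a simple path if necessary (or noting that the $E_d$-orientation is acyclic so no pruning is needed) gives the required directed path. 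This simultaneously shows $H\oslash G$ is connected, since every vertex $(e,v)$ of $H\oslash G$ lies on some $e\oslash Q$ and hence on such an $s$-$t$ path.

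Finally I would record that $s(H)$ and $t(H)$ are genuinely distinct vertices of $H\oslash G$ (this follows because in $H$ itself $s(H)\neq t(H)$, as every edge of $H$ lies on a directed $s$-$t$ path of positive length, and the identifications only merge a vertex $u\in V(H)$ with pairs $(e,s(G))$ or $(e,t(G))$ sharing that endpoint), so $H\oslash G$ with the two distinguished points $s(H),t(H)$ satisfies the definition of an $s$-$t$ graph. The main obstacle is purely bookkeeping: one must be careful that the vertex identifications $(e,t(G))\equiv(\tilde e,s(G))$ when $e^+=\tilde e^-$, together with the embedding $V(H)\hookrightarrow V(H\oslash G)$, are consistently applied so that the concatenation of the pieces $e'\oslash Q^{(i)}$ along $R$ really does join up at the vertices $u_i$; once that compatibility is checked, the proof is a direct unwinding of definitions with no genuine difficulty.
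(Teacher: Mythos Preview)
Your argument is correct and follows essentially the same approach as the paper's proof: both build a directed $s(H)$--$t(H)$ path in $H\oslash G$ through a given edge $e\oslash f$ by taking a directed $s$--$t$ path $R$ in $H$ through $e$ and replacing each edge of $R$ by (a copy of) a directed $s$--$t$ path in $G$, using the path through $f$ for the edge $e$ itself. Your version is in fact more detailed than the paper's, which simply states the form of such paths without spelling out the bookkeeping you carefully verify.
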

  \begin{proof}
 Note that every  path from   $s(H)$ to  $t(H)$ in $H\oslash G$ is a  path which is obtained
  from a path $(y_i)_{i=0}^l$ in $H$ from   $s(H)$ to  $t(H)$, by replacing each edge $e_i=\{y_{i-1}, y_i\}$  
  by a path in $H\oslash G$ of the form $\big(e_i,x^{(i)}_j \big)_{j=0}^{l_i}$, where $(x^{(i)}_j)_{j=0}^{l_i}$ is path from $s(G)$ to $t(G)$ in $G$,  for $i=1,2\ldots, l$. 
\end{proof}
Assume  that $(H, d_H)$ and $(G,d_G)$ are normalized geodesic $s$-$t$ graphs. On $V(H\oslash G)$ we let $d_{H\oslash G}$ be the geodesic metric on $V(H\oslash G)$ generated by the weight function defined on all $e\oslash f\in E(H\oslash G)$ by
$$w_{H\oslash G}(e\oslash f)= d_H(e)\cdot d_G(f).$$
It follows then that $(H\oslash G, d_{H\oslash G})$ is a normalized geodesic $s$-$t$ graph.
Assume, moreover that  $\nu_G$ and $\nu_H$ are probabilities on $E(G)$ and $E(H)$, respectively.
We consider the {\em product probability $\nu_{H\oslash G}$ on $E_d(H\oslash G)$}, which is given by 
$$\nu_{H\oslash G} (e\oslash f)= \nu_H(e)\cdot\nu_G(f),\text{ for $e\in E(H)$ and $f\in E(G)$.}$$
It follows then that $(H\oslash G, d_{H\oslash G}, \nu_{H\oslash G})$ is a measured normalized geodesic $s$-$t$ graph.

We state some easy to prove and well known facts about slash products:
\begin{prop}\label{P:2.7.1}{\rm  \cite{LeeRaghavendra2010}*{Lemma 2.3}}
Taking slash products is an associative operation, i.e. if
$G$, $H$ and $K$ are $s$-$t$ graphs then 
Then 
\begin{align*}
\Psi: &V\big(K\oslash (H\oslash G)\big)\to V\big((K\oslash H)\oslash G\big),\\
&\quad e\oslash(f\oslash v)\mapsto (e\oslash f)\oslash v, \text{ for $e\in E(K)$, $f\in E(H)$, and $v\in V(G)$.}
 \end{align*}
 is a graph isomorphism. Identifying graphs  $K\oslash (H\oslash G)$ and  $(K\oslash H)\oslash G$ we denote them simply by $K\oslash H\oslash G$
and $e\oslash f\oslash g$ instead of $(e\oslash f)\oslash g$ and $e\oslash( f\oslash g)$.

 Moreover, if $(G,d_G)$, $(H,d_H)$ and $(K, d_K)$ are geodesic $s$-$t$ graphs then 
 $$\Psi \big(V\big(K\oslash (H\oslash G)\big), d_{K\oslash (H\oslash G)}\big)\to \big(V\big((K\oslash H)\oslash G\big), d_{(K\oslash H)\oslash G}\big)$$ 
 is also a  metric isometry, and 
 \begin{align*}
 \nu_{K\oslash (H\oslash G)}(e\oslash(f\oslash g))&= \nu_{(K\oslash H)\oslash G}((e\oslash f)\oslash g)= \nu_K(e)\nu_H(f)\nu_G(g), \\
 &\text{ for $e\in E(K)$, $f\in E(H)$, and $g\in E(G)$.}
 \end{align*}
\end{prop}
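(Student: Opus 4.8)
The plan is to establish the three assertions in the order stated — first that $\Psi$ is a well-defined graph isomorphism, then that it is a metric isometry when the three factors carry geodesic metrics, and finally that it intertwines the product probabilities. The point is that once the first assertion is in hand, the other two reduce to one-line computations, because both the weight function and the product probability on a slash product are \emph{multiplicative} with respect to nesting.

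For the graph isomorphism I would first unwind the definition of the vertex set of a doubly nested slash product all the way down to ``triples''. Applying the definition of $V(K\oslash Z)$ with $Z=H\oslash G$, then expanding $Z$ one more level and invoking $s(H\oslash G)=s(H)$, $t(H\oslash G)=t(H)$ from Proposition \ref{P: 2.7.1}, one obtains a presentation of $V(K\oslash(H\oslash G))$ as the set of formal triples $e\oslash(f\oslash v)$ with $e\in E_d(K)$, $f\in E_d(H)$, $v\in V(G)$, modulo two families of identifications: the \emph{inner} ones forced by $v\in\{s(G),t(G)\}$ (which collapse each copy of $G$ onto an edge of $H$, then merge the duplicated $H$-vertices), and the \emph{outer} ones forced by $f$ being incident to $s(H)$ or $t(H)$ (which collapse each copy of $H$ onto an edge of $K$). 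The crux is then the observation that unwinding $V((K\oslash H)\oslash G)$ in the opposite order yields the same triple set modulo \emph{the same} relation: the inner and outer identifications act on disjoint ``coordinates'' and therefore commute, so the identity map on representatives descends to the asserted bijection $\Psi\colon e\oslash(f\oslash v)\mapsto(e\oslash f)\oslash v$. Checking that $\Psi$ respects adjacency is immediate: a directed edge $e\oslash(f\oslash g)$ of $K\oslash(H\oslash G)$ has endpoints the triples $e\oslash(f\oslash g^-)$ and $e\oslash(f\oslash g^+)$, which are exactly the endpoints of the directed edge $(e\oslash f)\oslash g$ of $(K\oslash H)\oslash G$; this is a bijection of the directed (and likewise undirected) edge sets, compatible with $\Psi$ on vertices, so $\Psi$ is a graph isomorphism.

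Granting the first assertion, the isometry statement follows because each of $d_{K\oslash(H\oslash G)}$ and $d_{(K\oslash H)\oslash G}$ is the geodesic metric generated by its weight function, and a graph isomorphism preserving the generating weights automatically preserves distances (it carries paths to paths of equal length, hence minima to minima). So it suffices to compare the two weights under the edge bijection above. Using that in $(H\oslash G,d_{H\oslash G})$ — a normalized geodesic $s$-$t$ graph in which, as for every edge of a slash product, $f\oslash g$ lies on an $s$-$t$ path — Proposition \ref{P:2.3.0} gives $d_{H\oslash G}(f\oslash g)=d_H(f)\,d_G(g)$, one computes $w_{K\oslash(H\oslash G)}(e\oslash(f\oslash g))=d_K(e)\,d_{H\oslash G}(f\oslash g)=d_K(e)\,d_H(f)\,d_G(g)$, while symmetrically $w_{(K\oslash H)\oslash G}((e\oslash f)\oslash g)=d_{K\oslash H}(e\oslash f)\,d_G(g)=d_K(e)\,d_H(f)\,d_G(g)$; these coincide, so $\Psi$ is an isometry. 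The probability statement is the identical computation with $\nu$ in place of $d$: $\nu_{K\oslash(H\oslash G)}(e\oslash(f\oslash g))=\nu_K(e)\,\nu_{H\oslash G}(f\oslash g)=\nu_K(e)\,\nu_H(f)\,\nu_G(g)$, and the same value is obtained for $\nu_{(K\oslash H)\oslash G}((e\oslash f)\oslash g)$, so the two probabilities agree under $\Psi$.

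The step I expect to be the main obstacle is the claim, inside the first assertion, that the two iterated quotients define the same equivalence relation on triples. I would make this precise by fixing a normal form for vertices: a triple $e\oslash(f\oslash v)$ with $v\notin\{s(G),t(G)\}$ represents an interior point of the $G$-copy indexed by $(e,f)$; a triple with $v\in\{s(G),t(G)\}$ represents a vertex $u'\in V(H)$ indexed by $e$, and when $u'\in\{s(H),t(H)\}$ it further represents a vertex of $K$. One then checks that both nesting orders assign every triple the same normal form and the same set of representatives. This is a finite, essentially mechanical case analysis, but it is exactly where one must be careful about the interplay of the inner and outer identifications, and where the equalities $s(H\oslash G)=s(H)$, $t(H\oslash G)=t(H)$ are needed to guarantee that the outer ($K$-level) identifications of $K\oslash(H\oslash G)$ are attached at the correct vertices.
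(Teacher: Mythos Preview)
Your argument is correct. Note, however, that the paper does not actually supply a proof of this proposition: it is introduced with the phrase ``We state some easy to prove and well known facts about slash products'' and is cited to \cite{LeeRaghavendra2010}*{Lemma 2.3}, with no proof given in the text. So there is no proof in the paper to compare against; you have filled in what the authors chose to omit.

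A couple of minor remarks on your write-up. First, your invocation of Proposition~\ref{P:2.3.0} to justify $d_{H\oslash G}(f\oslash g)=d_H(f)\,d_G(g)$ is the right move: since $d_{H\oslash G}$ is only \emph{defined} as the geodesic metric generated by the weight $w_{H\oslash G}$, one does need to check that no shorter path exists between the endpoints of an edge, and the fact that every edge lies on an $s$-$t$ path in a geodesic $s$-$t$ graph is exactly what guarantees this. Second, the case analysis you flag as the ``main obstacle'' --- that the inner and outer identifications on triples commute --- is indeed where the content lies, and your normal-form description is a clean way to organize it; the paper's implicit position is that this verification, while requiring care, is routine enough to be left to the reader.
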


For an $s$-$t$ graph $G$ we define the {\em $n$-th  slash power of $G$}, by induction: $G^{\oslash 1}=G$, and assuming
$G^{ \oslash n}$ is defined we put $G^{\oslash(n+1)}=G\oslash G^{\oslash n}$. It follows from the above  remark  that $G^{\oslash n}$ is also an $s$-$t$ graph
with $s(G^{\oslash n})=s(G)$ and  $t(G^{\oslash n})=t(G)$ (using the identification of $V(G)$ with a subset of $V(G^{\oslash n})$ introduced in the previous subsection).
Using the graph-isomorphisms defined in Proposition  \ref{P:2.7.1},
we can write the elements  $e\in E(G^{\oslash n})$ as 
$e=e_n\oslash e'$,  as $e=\tilde e\oslash e_1$, or as $e=e_1\oslash e_2\oslash \ldots e_n$, with $e_j\in E(G)$, for $j=1,2\ldots, n$, and $e',\tilde e\in E(G^{\oslash(n-1)})$.
We can write the elements $v\in V(G^{\oslash n})$ as
$v=e_n\oslash w$, or as $e'\oslash z$, with $e_n\in E(G)$, $w\in V(G^{\oslash(n-1)})$, $e'\in E(G^{\oslash (n-1)})$ and $z\in V(G)$.
If $(G,d_G,\nu_G)$ is a measured normalized metric $s$-$t$ graph, we define 
metric $d_{G^{\oslash n}}$ on   $V(G^{\oslash n})$ and the probability $\nu_{G^{\oslash n}}$ also inductively 
by $d_{G^{\oslash 1}}=d_G$, $\nu_{G^{\oslash 1}}=\nu_G$, 
 and $ d_{G^{\oslash n}}(e)=d_G(e_n) d_{G^{\oslash (n-1)}}(e')$ and 
 $ \nu_{G^{\oslash n}}(e)=\nu_G(e_n) \nu_{G^{\oslash (n-1)}}(e')$, for  $e=e_n\oslash e'$, $e'\in E(G^{\oslash (n-1)})$, and $e_n\in E(G)$.\\

\begin{prop}\label{P:2.7.3} Let $(G,d_G)$ and $(H,d_H)$ be normalized geodesic $s$-$t$ graphs, and 
let $G'$ and $H'$ be $s$-$t$ subgraphs of $ H\oslash G$, respectively.
Then $H'\oslash G'$ is an  $s$-$t$ subgraph of $ H\oslash G$, and thus, by Lemma \ref{L:2.3.2}, it is an isometric subgraph of 
$H\oslash G$.

In particular, for $n\in\N$,  $G'^{\oslash n}$ is an $s$-$t$, and thus isometric, subgraph of $G^{\oslash n}$.
\end{prop}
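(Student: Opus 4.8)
The plan is to verify directly from the definition of the slash product that $H'\oslash G'$ is a subgraph of $H\oslash G$, then to recognize it as an $s$-$t$ subgraph with the correct distinguished vertices, and finally to invoke Lemma~\ref{L:2.3.2}(2) for the isometry. (I read the hypothesis as ``$G'$ and $H'$ are $s$-$t$ subgraphs of $G$ and $H$ respectively'', equipped with the metrics and orientations induced from $G$ and $H$; the induced orientation is the only $s$-$t$ orientation available, since in a geodesic $s$-$t$ graph an edge $\{u,v\}$ lies on a directed $s$-$t$ path from $u$ to $v$ exactly when $d(s,u)<d(s,v)$, by Proposition~\ref{P:2.3.0}, and these distances from $s$ agree in $G'$ and in $G$ by Lemma~\ref{L:2.3.2}(2), so $H'\oslash G'$ is a well-defined directed graph.)

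First I would establish the inclusion $H'\oslash G'\subseteq H\oslash G$. Since $E_d(G')\subseteq E_d(G)$, $E(G')\subseteq E(G)$, $V(G')\subseteq V(G)$ with $s(G')=s(G)$ and $t(G')=t(G)$ (and likewise for $H$), each datum defining $H'\oslash G'$ — the vertices $(e,v)$ with $e\in E_d(H')$, $v\in V(G')$, the identifications $(e,t(G))\equiv(\tilde e,s(G))$ coming from $e^+=\tilde e^-$ in $H'$, and the edges $\{(e,u),(e,v)\}$ with $e\in E(H')$, $\{u,v\}\in E(G')$ — is among the corresponding data of $H\oslash G$. The identification rule refers only to $s(G)$, $t(G)$, and head/tail incidences in $H$, all inherited by the subgraphs, so the natural map $V(H'\oslash G')\to V(H\oslash G)$ is a well-defined injection respecting edge sets, which makes $H'\oslash G'$ a subgraph of $H\oslash G$.

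Next I would apply Proposition~\ref{P: 2.7.1} to the $s$-$t$ graphs $H'$ and $G'$: it gives that $H'\oslash G'$ is an $s$-$t$ graph with $s(H'\oslash G')=s(H')$ and $t(H'\oslash G')=t(H')$, and since $H'$ is an $s$-$t$ subgraph of $H$ these equal $s(H)=s(H\oslash G)$ and $t(H)=t(H\oslash G)$ (the last equalities by Proposition~\ref{P: 2.7.1} again). Hence $H'\oslash G'$ is an $s$-$t$ subgraph of $H\oslash G$. To apply Lemma~\ref{L:2.3.2}(2) I would then check that the metric $d_{H'\oslash G'}$ built on $H'\oslash G'$ coincides with the geodesic metric induced by $d_{H\oslash G}$: both are generated by weights on $E(H'\oslash G')$, and the weights agree because $w_{H'\oslash G'}(e\oslash f)=d_{H'}(e)\,d_{G'}(f)=d_H(e)\,d_G(f)=w_{H\oslash G}(e\oslash f)$, using that $G'$ and $H'$ are isometric subgraphs of $G$ and $H$ (Lemma~\ref{L:2.3.2}(2)), so that edge lengths are unchanged. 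Lemma~\ref{L:2.3.2}(2) then yields that $(H'\oslash G',d_{H'\oslash G'})$ is an isometric subgraph of $(H\oslash G,d_{H\oslash G})$.

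Finally, the ``in particular'' clause follows by induction on $n$: for $n=1$ it is the hypothesis on $G'$, and if $G'^{\oslash n}$ is an $s$-$t$, hence isometric, subgraph of the normalized geodesic $s$-$t$ graph $G^{\oslash n}$, then the first part applied with $H:=G$, $G:=G^{\oslash n}$, $H':=G'$, $G':=G'^{\oslash n}$ shows $G'^{\oslash(n+1)}=G'\oslash G'^{\oslash n}$ is an $s$-$t$, hence isometric, subgraph of $G\oslash G^{\oslash n}=G^{\oslash(n+1)}$. The only step requiring genuine care is the bookkeeping of vertex identifications in the subgraph inclusion; everything else is a direct appeal to Proposition~\ref{P: 2.7.1} and Lemma~\ref{L:2.3.2}(2).
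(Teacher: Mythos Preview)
Your proof is correct and follows essentially the same approach as the paper's: verify that $H'\oslash G'$ is an $s$-$t$ subgraph of $H\oslash G$ (the paper does this by describing the $s$-$t$ paths in $H'\oslash G'$ directly, whereas you invoke Proposition~\ref{P: 2.7.1}), observe that the induced weight function on $E(H'\oslash G')$ is $e\oslash f\mapsto d_H(e)\,d_G(f)$, and then apply Lemma~\ref{L:2.3.2}(2). Your write-up is in fact more careful than the paper's, spelling out the vertex-identification bookkeeping, the orientation consistency, and the induction for the ``in particular'' clause.
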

\begin{proof} The claim follows from the fact that every $s$-$t$ path in  $H'\oslash G'$, is a
concatination of paths  $P_i= \big((x_{i-1},x_i)\oslash y^{(i)}_j\big)_{j=0}^{l_i}$, $i=1,2,\ldots l$,
where $(y_j^{(i)})_{j=0}^{l_i}$ is a path in $G'$ from $y^{(i)}_0=s(G)$ to $y_{l_i}^{(i)}=t(G)$ in $G'$, for 
every $i=1,2\ldots, l$, and $(x_i)_{i=0}^l$  is a path in $H'$ for $s(H)$ to $t(H)$.

Since the metric on $H'\oslash G'$ induced by $d_H\oslash d_G$ is the geodesic  metric induced by the  weight function 
$w': E(H'\oslash G')\to \R^+$, $e\oslash f\mapsto d_H(e)\cdot d_G(f)$, Lemma \ref{L:2.3.2} implies that 
$H'\oslash G'$ is an isometric subgraph of $H\oslash G$.
\end{proof} 

The following shows that a high enough slash power of a generalized Laakso graph will eventually contain a balanced Laakso subgraph. This will allow us to only consider the balanced case in our analysis in Section 4. 

\begin{lem}\label{L:2.7.4} Let $L=(V(L),E(L))$ be  a  $(k,l_1,l_2,m)$-Laakso graph, with $l_1< l_2$.
Put
$$n_0=2+\Bigg\lfloor\frac {\ln (l_2) -\ln (l_1)}{\ln(k+m+l_2)-\ln(k+m+l_1)}\Bigg\rfloor.$$
Then $(L^{\oslash n_0},d_{L^{\oslash n_0}} )$  contains a subgraph which is a balanced generalized Laakso graph $L'$  with $s(L')=s(L^{\oslash n_0})$
and  $t(L')=t(L^{\oslash n_0})$. Moreover, if $(L,d_L)$ is also a normalized geodesic $s$-$t$ graph,  $c_0$ being  the  metric length of its cycle,
then  $L'$ with the induced geodesic metric $d_{L'}$ on edges of $L'$ will be an isometric geodesic subgraph of
 $(L^{\oslash n_0},d_{L^{\oslash n_0}})$, whose cycle has metric length $c_0$.
\end{lem}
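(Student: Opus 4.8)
The plan is to track what happens to the two "parallel branch lengths'' of a generalized Laakso graph under iterated slash powers, and to stop at the first power where one branch has caught up with the other in \emph{graph length}. Recall that in a $(k,l_1,l_2,m)$-Laakso graph $L$ the two $s$-$t$ paths through the cycle have graph lengths $k+l_1+m$ and $k+l_2+m$; call these $a_1=k+l_1+m$ and $a_2=k+l_2+m$, so $a_1<a_2$. When we form $L^{\oslash 2}=L\oslash L$, each edge of $L$ is replaced by a copy of $L$, which has two $s$-$t$ routes of graph lengths $a_1$ and $a_2$. Thus inside $L^{\oslash 2}$ we can find $s$-$t$ paths whose graph lengths are $a_1 a_2$ (take the long route in the outer copy and the short route in each inner copy, or vice versa) and, more importantly for us, an $s$-$t$ path of graph length $a_2^{\,j} a_1^{\,n-j}$ for each $0\le j\le n$ inside $L^{\oslash n}$: route along the short branch in $j$ of the $n$ nested levels and along the long branch in the other $n-j$. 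More precisely, I would first show by an explicit inductive construction that $L^{\oslash n}$ contains, as an $s$-$t$ subgraph, a \emph{bundle of two $s$-$t$ paths joined only at $s$ and $t$} whose graph lengths are any two prescribed products of this form; this uses Proposition \ref{P: 2.7.1}, the associativity in Proposition \ref{P:2.7.1}, and Proposition \ref{P:2.7.3} to guarantee the subgraph is isometric.

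Next I would choose the level where the two branches can be made to have equal graph length \emph{up to a compensating segment}. Set $r=a_2/a_1>1$. The exponent in the definition of $n_0$ is exactly
\[
\frac{\ln(l_2)-\ln(l_1)}{\ln(k+m+l_2)-\ln(k+m+l_1)}=\frac{\ln(l_2/l_1)}{\ln r},
\]
so $n_0-2=\lfloor \ln(l_2/l_1)/\ln r\rfloor$, i.e. $n_0-2$ is the largest integer $p$ with $r^{\,p}\le l_2/l_1$, equivalently $l_1 r^{\,p}\le l_2<l_1 r^{\,p+1}$. The point of this choice: inside $L^{\oslash n_0}$, replace one of the $l_2$ ``long-branch'' edges at the outer level by a nested copy of $L^{\oslash(n_0-1)}$ routed along its \emph{short} branch $n_0-2$ times — this turns that single edge (graph length $1$, but metric length one step of the branch) into a path of graph length $a_1^{\,n_0-2}$... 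I would instead phrase it multiplicatively: one constructs two $s$-$t$ paths $Q_1,Q_2$ in $L^{\oslash n_0}$, identical except that where $Q_2$ runs through a block contributing a factor $l_2$ to its length, $Q_1$ runs through an analogous block contributing $l_1 r^{\,n_0-2}\le l_2$, and then one pads $Q_1$ with a short extra detour of graph length $l_2-l_1 r^{\,n_0-2}$ (available because $r^{\,n_0-2}\le l_2/l_1$ and because every level still contains a $k$- and an $m$-segment and unused branch edges to absorb a detour). Matching the two middle branches in graph length while keeping the prefix $x$-segment and suffix $z$-segment intact exhibits a balanced generalized Laakso subgraph $L'$ with $s(L')=s(L^{\oslash n_0})$, $t(L')=t(L^{\oslash n_0})$.

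For the ``moreover'' part, assume $(L,d_L)$ is a normalized geodesic $s$-$t$ graph, so every $s$-$t$ path has metric length $1$ and the cycle of $L$ has metric length $c_0$ (the two half-branches each have metric length $c_0/2$, while $d_L(s,y)+d_L(z,t)=1-c_0/2$). Under the slash construction the weight of $e\oslash e'$ is $d_L(e)\,d_{L^{\oslash(n-1)}}(e')$, so $(L^{\oslash n_0},d_{L^{\oslash n_0}})$ is again normalized, and any $s$-$t$ subgraph carrying the induced geodesic metric is an isometric subgraph by Lemma \ref{L:2.3.2}(2) (or Proposition \ref{P:2.7.3}). Thus $(L',d_{L'})$ is an isometric geodesic subgraph of $(L^{\oslash n_0},d_{L^{\oslash n_0}})$. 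That the metric length of the cycle of $L'$ equals $c_0$ I would get by choosing the balanced subgraph so that its cycle is formed from the two outermost half-branches of the \emph{outer} copy of $L$ in $L^{\oslash n_0}$ (whose common vertices are $y$ and $z$, the copies of the branch endpoints), possibly after re-routing the portions strictly between $y$ and $z$ inside nested copies; since all those re-routings stay inside $s$-$t$ subcopies they do not change the metric distance $d_{L^{\oslash n_0}}(y,z)=d_L(y,z)=c_0/2$, and hence the cycle of $L'$ has metric length $2\cdot c_0/2=c_0$.

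The main obstacle I anticipate is purely bookkeeping: organising the nested substitutions so that the \emph{graph lengths} of the two chosen branches become exactly equal (one needs the detour of graph length $l_2-l_1r^{\,n_0-2}$ to genuinely be realizable by unused edges at some level — this is where $l_1+l_2\ge 3$ and the availability of the $k$- and $m$-segments matter), while simultaneously keeping the \emph{metric length} of the resulting cycle pinned at $c_0$. Making the indexing of ``which edge at which of the $n_0$ levels gets routed which way'' precise, and verifying the floor in the definition of $n_0$ is exactly the number of levels needed, is the part that requires care; everything metric-theoretic then follows formally from the isometric-subgraph results already proved.
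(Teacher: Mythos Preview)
Your overall strategy---grow the short branch faster than the long branch by always routing it through the longer sub-path, and stop once the short branch overtakes the long one---matches the paper's. But there is a genuine gap at the crucial step where you need the two branch graph-lengths to be \emph{exactly} equal.

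Concretely: graph lengths are integers, yet your argument hinges on the quantity $l_1 r^{\,n_0-2}=l_1(a_2/a_1)^{n_0-2}$, which is in general not an integer, and on a ``detour of graph length $l_2-l_1 r^{\,n_0-2}$,'' which is neither an integer nor a well-defined object. In $L^{\oslash n_0}$ there are no free-floating detours: every $s$-$t$ path is a concatenation of $s$-$t$ paths in nested copies of $L$, so the only way to adjust a branch length is to switch, edge by edge at some level, between the short sub-path (length $a_1=k+l_1+m$) and the long one (length $a_2=k+l_2+m$). Each such switch changes the branch length by exactly $l_2-l_1$. Hence the heart of the matter is a \emph{divisibility} claim: you must show that the gap between the two branch lengths at level $n_0-1$ is a multiple of $l_2-l_1$, so that some integer number of switches closes it exactly. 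The paper does this by writing the branch lengths as $M_n=l_1(k+m+l_2)^{n-1}$ and $N_n=l_2(k+m+l_1)^{n-1}$, interpolating $M_{n_0,i}=M_{n_0-1}a_1+i(l_2-l_1)$ between $M_{n_0-1}a_1\le N_{n_0}$ and $M_{n_0}>N_{n_0}$, and then proving via a binomial expansion that $M_{n_0-1}-N_{n_0-1}$ (and hence $N_{n_0}-M_{n_0,0}=a_1(N_{n_0-1}-M_{n_0-1})$) is divisible by $l_2-l_1$. Your proposal contains no analogue of this divisibility step, and the ``padding'' mechanism you invoke cannot substitute for it.

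Your treatment of the ``moreover'' part (isometry via Lemma~\ref{L:2.3.2}(2), and metric cycle length $c_0$ because the constructed cycle runs between the two branch vertices of the outermost copy of $L$) is fine and agrees with the paper.
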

\begin{proof} We write as in Example \ref{Ex:2.4.1} (c)  the graph $L$ as  $L=(V(L), E(L))$,
with  
 $$V(L)=\{x_i\}_{i=0}^k\cup\{ y_i^{(1)}\}_{i=0}^{l_1}\cup \{ y^{(2)}_i\}_{i=0}^{l_2}\cup \{z_i\}_{i=0}^m,$$
 where we make the following identifications:
 $$x_k\equiv y^{(1)}_0\equiv y^{(2)}_0,\text{ and }  y^{(1)}_{l_1}\equiv y^{(2)}_{l_2}\equiv z_0.$$
 \begin{align*} E(L)&=\big\{\{x_{i-1}, x_i\}: i=1,2,\ldots,k\big\} \cup \big\{\{z_{i-1}, z_i\}: i=1,2,\ldots,m\big\}\\
                               &\qquad   \cup  \big\{\{y^{(1)}_{i-1}, y^{(1)}_i\}: i=1,2,\ldots,l_1\big\}    \cup  \big\{\{y^{(2)}_{i-1}, y^{(2)}_i\}: i=1,2,\ldots,l_2\big\}.
                                  \end{align*}
We define the following four paths in $L$
\begin{align*}
P_1^{(1)}&=(x_i)_{i=0}^k\smile(y^{(1)}_i)_{i=0}^{l_1}\smile (z_i)_{i=1}^m\text{ and }
P_1^{(2)}=(x_i)_{i=0}^k\smile(y^{(2)}_i)_{i=0}^{l_2}\smile (z_i)_{i=1}^m,\\
Q_1^{(1)}&=(y^{(1)}_i)_{i=0}^{l_1}\text{ and }Q_1^{(2)}=(y^{(2)}_i)_{i=0}^{l_2}.
\end{align*}

For $n\in \N$, $n\ge 2$, we define inductively the following paths  $Q^{(1)}_n$, $Q^{(2)}_n$, which are subgraphs of $L^{\oslash n}$:
 $Q^{(1)}_n$ is obtained by replacing each  edge of $Q^{(1)}_{n-1}$, by $P_1^{(2)}$  
and $Q^{(2)}_n$ is  obtained by replacing each  edge of $Q^{(2)}_{n-1}$,  by the path $P_1^{(1)}$.
Note that $Q^{(1)}_n$ and $Q^{(2)}_n$ generate a cycle in  $L^{\oslash n}$ of length $c_0$.

Denote the graph lengths of $Q^{(1)}_n$ and $Q^{(2)}_n$ by $M_n$ and $N_n$. From  a simple induction argument it follows that
$$M_n =(k+m+l_2)^{n-1} l_1 \text{ and } N_n= (k+m+l_1)^{n-1} l_2.$$

\noindent and it is easy to see that for the above defined number $n_0$
$$ M_{n_0-1}\le N_{n_0-1}< N_{n_0}<M_{n_0}.$$
For $i=0,1,2,\ldots, M_{n_0-1}$, let   $Q^{(1)}_{n_0,i} $  be the  path which is 
obtained by replacing  $i$ edges of $Q^{(1)}_{n_0-1}$ by $P^{(2)}_1$ and 
$M_{n_0-1}-i$ edges by $P^{(1)}_1$.  
Note that $Q^{(1)}_{n_0,i} $ 
is a subgraph of $L^{\oslash n_0}$ which together  with $Q^{(2)}_{n_0}$  generates a cycle in $L^{\oslash n_0}$.
Let $M_{n_0,i}$ be the graph length of $Q^{(1)}_{n_0,i}$ and note that 
$$M_{n_0-1}<M_{n_0,i}= (k+m)M_{n_0-1} + i l_2 + (M_{n_0-1}-i)l_1\le M_{n_0}.$$

We will show that for some choice of $0\le i\le M_{n_0-1}$ 
 it follows that $M_{n_0,i}=N_{n_0}$.
Since 
\begin{align*}
M_{n_0,i}-N_{n_0}&=(k+m+l_1) (M_{n_0-1}-N_{n_0-1})+ i(l_2-l_1)\\
                              &=\begin{cases} (k+m+l_2) M_{n_0-1} -  (k+m+l_1) N_{n_0-1}                                                          &\text{ if $i=M_{n_0-1}$,}\\
                                                (k+m+l_1)(M_{n_0-1}-  N_{n_0-1}  )                                                            &\text{ if $i\keq0$, }\end{cases}\\
                                 &=\begin{cases} M_{n_0} - N_{n_0}   > 0                                                       &\text{ if $i=M_{n_0-1}$,  }\\
                                                (k+m+l_1)(M_{n_0-1}- N_{n_0-1}  )\le 0                                                            &\text{ if $i=0$,}\end{cases}                                            
                                                \end{align*}
                       
\noindent it is enough to show that $M_{n_0-1}-N_{n_0-1}$ is divisible by $l_2-l_1$, in order to deduce that $M_{n_0,i}=N_{n_0}$ for an appropriate choice of $i$.
To verify this we 
write 
\begin{align*}M_{n_0-1}-N_{n_0-1}&=(k+m+l_2)^{n_0-2} l_1-   (k+m+l_1)^{n_0-2} l_2\\
&=\sum_{j=0}^{n_0-2} (l_2^j\cdot l_1-l_1^j\cdot l_2) {n_0-2 \choose j}(k+m)^{n_0-2-j}.
\end{align*} 

For $j=0$,  $l_2^j\cdot l_1-l_1^j\cdot l_2= l_1-l_2$, and 
for $j\ge 1$ it follows that 
$l_2^j\cdot l_1-l_1^j\cdot l_2=l_1\cdot l_2 (l^{j-1}_2-l_1^{j-1})$
is also an integer multiple of $l_2-l_1$.
We conclude therefore that for some appropriate choice of $0\le i\le M_{n_0-1}$
the paths $Q^{(1)}_{n_0,i} $ and $Q^{(2)}_{n_0} $  have equal paths length  and generate a cycle  $C$ which is 
a subgraph of $L^{\oslash n_0}$. Finally let $R_1$ be  any path in $L^{\oslash n_0}$ from $s(L^{\oslash n_0})$ to the nearest point of $C$
and $R_2$ any path from the point of $C$, which is nearest to $t(L^{\oslash n_0})$, to  $t(L^{\oslash n_0})$, then 
$R_1$, $R_2$ and $C$ generate a balanced generalized Laakso graph which is a subgraph of $L^{\oslash n_0}$. The ``moreover'' portion of the statement is immediate from Lemma \ref{L:2.3.2}.
\end{proof} 

\begin{rem}\label{R:2.1}
    Lemma \ref{L:2.7.4} immediately implies that for all $n\geq n_0$, $(L^{\oslash n},d_{L^{\oslash n}})$ contains a balanced generalized Laakso $s$-$t$ subgraph. Indeed, if we have such a graph, say $L_1\subset G^{\varoslash n_1}$, for some $n_1\in \N$, then by the definition of slash powers the graph $L_2$ obtained from $L_1$ by replacing each edge with the same $s$-$t$ path from $G$, will be a balanced generalized Laakso subgraph  $L_2\subset G^{\varoslash (n_1+1)}$.
\end{rem}

\section{Embedding Cycles into Trees} \label{S:3}

The following is a generalization and sharpening of a result by Gupta \cite[Lemma 7.1]{Gupta2001}.  The proof, which we include for completeness, is similar.

\begin{lem}\label{L:3.1}
Let $(V(C),E(C),d_C)$ be a geodesic cycle, and let
$$c_0=\length_{d_C}(C)=\sum_{e\in E(C)} d_C(e)$$
be its length.

 Let $(T,d_T)$  be a geodesic tree.
Assume that there is an expansive 
map
$$\Psi: (V(C),d_C)\to (V(T), d_T)$$
(meaning that $d_C(u,v)\le d_T(\Psi(u),\Psi(v))$, for $u,v\in V(C)$). 

Then there exists an edge $e=\{u,v\}\in E(C)$, for which
$$d_T(\Psi(u), \Psi(v))\ge \frac{c_0-d_C(u,v)}8.$$

\end{lem}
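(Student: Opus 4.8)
\emph{Proof plan.} The plan is to argue by contradiction after first disposing of an easy case. Since $\Psi$ is expansive, every edge $e=\{u,v\}\in E(C)$ satisfies $d_T(\Psi(u),\Psi(v))\ge d_C(u,v)=d_C(e)$, so if some edge has $d_C(e)\ge c_0/9$ we are already done, because then $9d_C(e)\ge c_0$, i.e.\ $d_C(e)\ge (c_0-d_C(e))/8$. Hence I may assume $d_C(e)<c_0/9$ for every $e\in E(C)$; in particular the arc-length positions of the vertices form a $(c_0/9)$-net of the circle of circumference $c_0$, and one checks that in this situation no edge is the ``long way around'', so $d_C(e)=w(e)$ for each edge and $c_0$ is the perimeter $\sum_e w(e)$. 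Now suppose, towards a contradiction, that $d_T(\Psi(u),\Psi(v))<\tfrac18\bigl(c_0-d_C(\{u,v\})\bigr)\le c_0/8$ for every edge $\{u,v\}\in E(C)$.

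Next I would introduce the measure $\mu$ on $V(C)$ giving vertex $x_i$ the weight $\mu_i=\tfrac12(d_C(e_i)+d_C(e_{i+1}))$, so that $\mu$ has total mass $c_0$, and choose a centroid $q\in T$ of the push-forward $\Psi_*\mu$: a point (a vertex of $T$ or an interior point of an edge) such that every connected component of $T\setminus\{q\}$ has $\Psi_*\mu$-mass at most $c_0/2$; such a point exists for any finite measure on a metric tree. Traversing the cycle, the vertices partition into maximal arcs, each contained in a single component of $T\setminus\{q\}$, with consecutive arcs joined by a ``crossing'' edge whose endpoints lie in different components. The geodesic image of a crossing edge contains $q$, so by the contradiction hypothesis both endpoints of such an edge have images within $c_0/8$ of $q$. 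Therefore every vertex that is an endpoint of a maximal arc has image in the ball $B(q,c_0/8)$, and any two such vertices $x,y$ satisfy $d_C(x,y)\le d_T(\Psi(x),\Psi(y))\le d_T(\Psi(x),q)+d_T(q,\Psi(y))<c_0/4$. Since $c_0/4<c_0/3$, an elementary fact about the circle (a set of points with pairwise geodesic distance $<c_0/3$ lies in an arc of length $<c_0/3$) now forces all the arc-endpoint vertices into a single cycle-arc $J$ of length $<c_0/4$.

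Finally I would propagate ``$J$ is short'' to the entire cycle. Each maximal arc $A$ has both endpoints in $J$, hence is one of the two circular arcs joining those endpoints; the short one is the portion of $J$ between them (length $<c_0/4$) and the long one has length $>3c_0/4$. If $A$ were the long one, its vertices would carry $\mu$-mass at least its arc-length, i.e.\ $>3c_0/4>c_0/2$, all inside one component of $T\setminus\{q\}$, contradicting the centroid property; so every maximal arc lies in $J$. Likewise each crossing edge $e$, having $w(e)=d_C(e)<c_0/9<c_0/2$, is the short arc between its endpoints and therefore also lies in $J$. But the maximal arcs together with the crossing edges cover all of $V(C)$, so every vertex lies in $J$ — impossible, since the vertices form a $(c_0/9)$-net of the circle while $J$ has length $<c_0/4$. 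The step I expect to be the crux is precisely this passage from ``endpoints of crossing edges are near $q$'' to ``all vertices are confined to a short arc'': it needs both the circle lemma and, decisively, the centroid bound on $q$ to forbid a single maximal arc from swallowing most of the cycle, and bookkeeping the losses (the $c_0/9$ in the reduction, the doubling $c_0/8\to c_0/4$, and the factor $\tfrac12$ in the centroid property) is what fixes the constant at $8$.
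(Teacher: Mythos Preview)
Your proof is correct and takes a genuinely different route from the paper's. The paper invokes Gupta's Steiner point removal theorem (Theorem~\ref{T:3.2}) as a black box: this reduces to the case where $T$ is a tree on $V(C)$ itself, and then a sequence of combinatorial reductions (normalizing tree-edge lengths to $d_C$, eliminating edges of length $c_0/2$, then showing one may assume every vertex has tree-degree $\le 2$) turns $T$ into a path, after which a direct traversal finds an edge $e\in E(C)$ with $d_T(e)= c_0 - d_C(e)$; the factor $8$ comes entirely from Gupta's theorem. Your argument is self-contained --- no appeal to Steiner removal --- and instead exploits the weighted centroid of $T$ for the push-forward of the arc-length measure $\mu$: expansiveness traps the endpoints of all crossing edges near the centroid, the circle geometry confines them to a short arc $J$, and the centroid mass bound forbids any maximal arc from taking the long way around. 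The paper's route isolates a sharper intermediate statement ($d_{T'}(e)\ge c_0-d_C(e)$ with no factor $8$, on the reduced tree $T'$), while your route makes the provenance of the constant $8$ transparent as the product of the triangle-inequality doubling $c_0/8\to c_0/4$ and the centroid halving.

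Two small points worth tightening. First, the ``circle lemma'' you invoke (pairwise distances $<c_0/3$ force the set into an arc of length $<c_0/3$) is true but not entirely obvious; it follows by taking $p,q$ realizing the diameter $d<c_0/3$ and noting that $\bar B(p,d)\cap \bar B(q,d)$ is exactly the short arc $[p,q]$, the hypothesis $3d<c_0$ ruling out overlap on the far side. Second, since $\Psi$ is expansive it is injective, so at most one vertex of $C$ can map to $q$, and a harmless perturbation of $q$ (or treating that lone vertex as its own arc-endpoint, which already lies in $B(q,c_0/8)$) makes the partition into maximal arcs and crossing edges go through cleanly.
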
 
For the proof of Lemma \ref{L:3.1} we recall the following important result by Gupta.
\begin{thm}\label{T:3.2} {\rm \cite[Theorem 1.1]{Gupta2001}}  Let $T=(V(T),E(T),d_T)$  be a geodesic tree and let $V'\subset V$. Then there is a   tree $T'=(V(T'), E(T'), d_{T'})$ with $V(T')=V'$, and a geodesic metric $d_{T'}$ on $T'$, so that 
\begin{equation}\label{E:3.2.1} 1\le \frac{d_{T'}(x,y)}{d_{T}(x,y)}\le 8,\text{ for $x,y\in V'$}.\end{equation}
\end{thm}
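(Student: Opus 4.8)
The statement to be proved is a \emph{restriction} result for tree metrics: the content is not that the restriction of $d_T$ to $V'$ is again a tree metric (it trivially is, being a sub-metric of a tree metric), but that this restricted metric is realizable, up to a bounded multiplicative factor, as a geodesic metric on a tree whose vertex set is \emph{exactly} $V'$, i.e.\ with no auxiliary Steiner points. Since this is precisely \cite[Theorem 1.1]{Gupta2001}, I only sketch the strategy. The first move is to reduce to a Steiner tree. Let $S\subset T$ be the minimal connected subtree of $T$ containing $V'$; since in a tree the unique path between two vertices of $V'$ already lies inside $S$, the induced geodesic metric $d_S$ satisfies $d_S(x,y)=d_T(x,y)$ for all $x,y\in V'$, so it suffices to construct $T'$ on $V'$ with $1\le d_{T'}/d_S\le 8$. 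In $S$ every leaf is a point of $V'$ (a non-terminal leaf could be pruned without affecting terminal distances), and after suppressing every degree-two \emph{Steiner} vertex (merging its two incident edges into one edge of summed weight) every remaining Steiner vertex is a genuine branch point of degree at least three. The remaining task is to eliminate these Steiner branch points while neither shrinking terminal distances nor expanding them by more than a factor of $8$.

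The construction of $T'$ I would use is hierarchical rather than a one-branch-point-at-a-time removal. Root $S$ at some $r\in V'$, and for each vertex $v$ of $S$ let $f(v)\in V'$ be a terminal in the subtree below $v$ that is nearest to $v$ in $d_S$, so $f(v)=v$ when $v\in V'$. I would then define $T'$ on the vertex set $V'$ by attaching each terminal $t\neq r$ to a single \emph{representative} terminal $\pi(t)$ lying strictly nearer the root, obtained by walking up from $t$ and snapping each Steiner branch point encountered to a nearby terminal via the labels $f(\cdot)$; the new edge $\{t,\pi(t)\}$ is given weight $d_S(t,\pi(t))$. The precise snapping rule is chosen so that the parent pointers $t\mapsto\pi(t)$ are acyclic and exhaust $V'\setminus\{r\}$, making $T'$ a genuine tree on $V'$. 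With this choice the lower inequality in \eqref{E:3.2.1} is immediate: every edge of $T'$ has weight equal to the $d_S$-distance of its endpoints, so any $T'$-path between $x,y\in V'$ is a concatenation of such edges and repeated use of the triangle inequality gives $d_{T'}(x,y)\ge d_S(x,y)$.

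The upper bound $d_{T'}\le 8\,d_S$ is the crux and the step I expect to be the main obstacle. For two terminals $u,v$ whose $d_S$-geodesic passes through Steiner branch points $s_1,\dots,s_m$, the $T'$-path replaces the passage through each $s_i$ by a detour out to a nearby terminal and back, and the extra length charged at $s_i$ is controlled by (roughly twice) the distance from $s_i$ to its nearest terminal $f(s_i)$, which is in turn bounded by the lengths of the $S$-edges of the branch hanging below $s_i$. The entire point of rooting $S$ and of the nearest-descendant choice $f$ is that these charges decay geometrically as one moves along the $u$--$v$ path, so that their total is a fixed multiple of $d_S(u,v)$ rather than compounding; a naive removal of Steiner points one at a time would multiply errors and fail to give any absolute constant. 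Tracking the constants through this geometric summation yields
\[
d_{T'}(u,v)\ \le\ 8\,d_S(u,v)\qquad\text{for all }u,v\in V',
\]
which is the upper inequality of \eqref{E:3.2.1}. The optimality of the constant $8$ is established in \cite{Gupta2001}, but for our purposes only this upper bound is needed.
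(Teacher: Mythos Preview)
The paper does not prove this statement at all: Theorem~\ref{T:3.2} is quoted verbatim from \cite[Theorem~1.1]{Gupta2001} and used as a black box in the proof of Lemma~\ref{L:3.1}. There is therefore no ``paper's own proof'' to compare against; any argument you supply is necessarily going beyond what the present paper does.

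As for the sketch itself, your outline is recognizably Gupta's construction (root at a terminal, label each Steiner vertex by a nearest terminal descendant, rewire terminals through these labels), and the lower bound $d_{T'}\ge d_S$ is indeed immediate from the triangle inequality once edges carry true $d_S$-weights. However, the upper bound is only asserted, not proved: you say the detour charges ``decay geometrically'' and that ``tracking the constants'' gives the factor $8$, but you have not specified the snapping rule $\pi(t)$ precisely enough to verify acyclicity, nor have you exhibited the inequality (or the level-based case analysis) that actually yields the constant $8$. In Gupta's paper this step is the entire content and requires a careful argument; a sketch that stops at ``geometric summation'' is not yet a proof. If you intend to include a self-contained proof here rather than cite the result, you would need to fill in that analysis explicitly.
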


\begin{proof}[Proof of Lemma \ref{L:3.1}] In view of Theorem \ref{T:3.2} it is enough to show the following claim: 

Let $(T,d_T)$ be a geodesic tree on $V(C)$, with the property that 
$$d_T(u,v)\ge d_C(u,v), \text{ for all $f=\{u,v\}\in E(T)$.}$$
Then there exists an edge $e=\{x,y\}\in E(C)$ for which
$$d_T(x,y)\ge c_0-d_C(x,y).$$

For each $f=\{u,v\}\in E(T)$ we can assume that 
\begin{equation}\label{E:3.2.2}
d_T(u,v)= d_C(u,v) = \sum_{j=1}^{m(f)} d_C ( x_{i-1}(f), x_i(f))\le c_0/2,
\end{equation} 
where  $(x_i(f))_{i=1}^{m(f)} $ is a shortest path from $u$ to $v$ in $C$ (which is unique if $d_C(u,v)<c_0/2$,  and if $d_C(u,v)=c_0/2$, there are two such paths).
Indeed, otherwise we could replace $d_T$ by the geodesic metric on $V(T)$ which 
is generated by the weight function $W: E(T)\to (0,c_0]$, $f=\{u,v\}\mapsto d_C(u,v)$.

For each $e=\{a,b\}\in E(C)$ we can write $d_T(a,b)$ as 
\begin{equation}
d_T(a,b)= \length_{d_T}([a,b]_T) = \sum_{j=1}^{n(e)}d_T(y_{j-1}(e),y_j(e))
\end{equation} 
where $[a,b]_T=(y_i(e))_{i=1}^{n(e)}$ denotes the (unique) path  from $a$ to $b$ in $T$.

After possibly passing to a tree $T'$ on $V(C)$ and a geodesic distance $d_{T'}$, 
 for which $d_{T'}(e)\le d_T(e)$, for all $e\in E(C)$,
we also can assume  that $d_T(e)<c_0/2$ for each $e\in E(T)$. This can be seen as follows:
Assume $d_C(a,b)=d_T(a,b)=c_0/2$ for some $e=\{a,b\}\in E(T)$.  Since $a$ and $b$ cannot be both leaves of $T$, we can assume that the degree of $b$ is at least $2$,
and thus that there exists a $c\in V(C)$, $a\not=c$, with $\{b,c\}\in E(T)$. It follows that $d_T(b,c)=d_C(b,c)<c_0/2$ (note that if $d_C(b,c)=d_C(a,b)=c_0/2$ it would follow that $a=c$).
We now consider the tree $T'$ on $V(C)$ whose edges  are 
$$E(T')=(E(T)\setminus\big\{\{a,b\}\big\})\cup \big\{\{a,c\}\big\}$$
(note that $\{a,c\}\not\in E(T)$, because otherwise $(a,b,c,a)$ would be  a cycle in $T$)
and the  geodesic distance $d_{T'}$, generated by the weight function
$W'(f)=d_{T}(f)$ if $f\in E(T)\setminus\big\{\{a,b\}\big\}$, and $W'(\{a,c\})= d_C(a,c)= d_C(a,b)- d_C(b,c)>0$.
It follows that for any $e=\{g,h\}\in E(C)$, $d_{T'}(e)=d_T(e)$, as long as 
$\{a,b\}$ is not an edge of the path $[g,h]_T$. If 
$\{a,b\}$ is an edge of the path $[g,h]_T$, we can replace in $[g,h]_T$ the edge $\{a,b\}$  by $\{a,c\}\smile\{c,b\}$ in order 
to get a walk in $T'$ from $g$ to $h$, whose length is at least as large as the length of path $[g,h]_{T'}$ in $T'$.

We suppose now that $V(C)=\{0,1,2,\ldots,n-1\}$ and $E(C)=\big\{\{i,i+1\}: i=0,1,2,\ldots,n-1\big\}$, with $n\ge3$, and where we  assume that addition in $V(C)$ is modulo $n$.
For $i\in V(C)$ we consider the clockwise  path in $C$ defined by
$S(i) =(i+j: j=1,2,\ldots, k_i)$ where 
$$k_i=\max \Big\{ k\in \N: \sum_{j=i+1}^{i+k} d_C(j-1,j)\le c_0/2\Big\}$$
and $S'(i)$ is the counter clockwise path consisting of the complement of $S(i)$, \ie\ 
$S'(i) = (i-j: j=1,2,\ldots , n-k_i-1)$.
We observe that for any $j\in S(i)$ or $j\in S'(i)$, there is a shortest path  from $i$ to $j$, which is   in $S(i)$, respectively $S'(i)$
(in the  limit case that $d_C(i,j)=c_0/2$, it follows from our convention that this path is in $S(i)$), and, thus,
it follows for $j\ge 0$ with  $i+j\in S(i)$, that 
\begin{equation}\label{E:3.2.3}
d_C(i,i+j)= \sum_{s=1}^{j} d_C(i+s-1, i+s).
\end{equation}

We also observe that for $u,v\in V(C)$, with 
$d_C(u,v)<c_0/2$, it follows that $u\in S(v)$, if and only if $v\in S'(u)$.

The following claim is crucial.

\noindent{\bf Claim.} After possibly passing to another tree  $T'$ on $V(C)$ and a geodesic distance $d_{T'}$, with 
$d_{T'}(e)\le d_T(e)$, for $e\in E(C)$, we may assume  that if  $\{v_0,u\}, \{v_0,w\}\in E(T)$, $u\not= w$, then 
 $u$ and $w$ cannot be  either  both in $S(v_0)$ or both in $S'(v_0)$.\\

\begin{figure}[h]
\centering
\begin{tikzpicture} 
    
    \draw[fill=none](0,0) circle (2.0) ;

    \draw[fill=black](-1.73205080757,-1) circle (1.5 pt) node [below left] {$v_0$};
    \draw[fill=black](-1.41421356237,1.41421356237) circle (1.5 pt) node [above left] {$u$};
    \draw[fill=black](0,2) circle (1.5 pt) node [above] {$w$};
    
    \draw[](-1.73205080757,-1) -- (-1.41421356237,1.41421356237) ;
    \draw[](-1.73205080757,-1) -- (0,2);
    \draw[dashed] (-1.41421356237,1.41421356237) -- (0,2);
        
    \end{tikzpicture}   
    \caption{We may assume $\{v_0,w\}\notin E(T)$, else, obtaining an improved tree by replacing the edge $\{v_0,w\}$  with $\{u,w\}$.}
\end{figure}
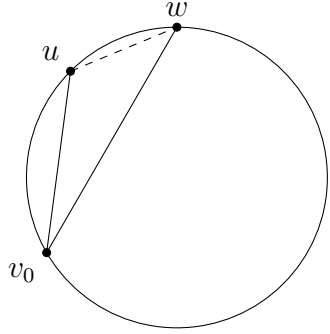

Indeed, assume, for example, that $u=v_0+s$, $w=v_0+t\in S(v_0)$, $t,s\ge 1$, and  that $d_C(v_0,u)<d_C(v_0,w)$, which by \eqref{E:3.2.3} means   that $0<s<t$.
Then we consider the tree $T'$ on $V(C)$ defined by $E(T')=( E(T)\setminus \big\{ \{v_0,w\}\big\})\cup \big\{\{u,w\}\big\}$,
and let $d_{T'}$ be the geodesic metric generated by the weight function
$W'(e)=d_T(e)$ if $e\in E(T)\setminus \big\{ \{v_0,u\}\big\}$ and 
$$W'(\{u,w\})= d_T(v_0,w)- d_T(v_0,u)= d_C(v_0,w)- d_C(v_0,u)<d_C(v_0,w) = d_T(v_0,w).$$
Then it follows for any $e=\{a,b\}\in E(C)$ that either  $[a,b]_T$ does not contain the edge $\{v_0,w\}$,
 in which case $d_T(a,b)=d_{T'}(a,b)$ or  $[a,b]_T$ does  contain the edge $\{v_0,w\}$. If we replace in $[a,b]_{T'}$ the edge
 $\{v_0,w\}$ by   $\{v_0,u\}$ and $\{u,w\}$
 we obtain a walk in $T'$, whose length with respect to $d_{T'}$
 equals to $d_T(a,b)$. Thus, proving the claim $d_{T'}(a,b)\le d_T(a,b)$.

The claim implies, in particular, that we can assume that the degree of each $v\in V(C)$ with respect to $T$ is at most $2$, and thus
that $T$ is a path, and we can order $V(C)$ into $x_0,x_1,\ldots x_{n-1}$ with $E(T)=\big\{ \{x_{i-1}, x_i\}: i=1,2,\ldots, n-1\big\}$.
After relabeling the elements of $V(C)$, we can assume that $x_0=0$, and that $x_1\in S(x_0)$. But this implies that $x_2\in S(x_1)$,
because otherwise $x_2\in S'(x_1) $ and $x_0\in S'(x_1)$ (here we use the assumption that $d_C(x_1,x_2)=d_T(x_1,x_2)<c_0/2$, and $d_C(x_0,x_1)=d_T(x_0,x_1)<c_0/2$), which is a contradiction to the above proven claim. Iterating this argument 
we conclude that $x_i\in S(x_{i-1})$, for $i=1,2,\ldots,n-1$.  Let $k\le n-1$ so that $x_k=n-1$. Then
\begin{align*} d_T(0,n-1)= \sum_{j=1}^k d_T(x_{j-1},x_j)= \sum_{j=1}^k d_C(x_{j-1},x_j)= c_0-d_C(0,n-1)
\end{align*}
which finishes the proof of our Lemma.
\end{proof}

\section{Embedding of Slash Powers of  Balanced Laakso Graphs into Trees}\label{S:4}

Throughout this section, $L$ is a fixed  balanced Laakso graph, and thus for some $k,l,m\in\N_0$, $l\ge 2$ we have
\begin{align*}
V(L)&=\{x_i\}_{i=0}^k\cup\{ y^{(1)}\}_{i=0}^{l}\cup \{ y^{(2)}_i\}_{i=0}^{l}\cup \{z_i\}_{i=0}^m,
  \text{ with }x_k\!\equiv\!y^{(1)}_0\!\equiv\!y^{(2)}_0, y^{(1)}_{l}\!\equiv\!y^{(2)}_{l}\equiv z_0\\
  \intertext{and }
E(L)&=\big\{\{x_{i-1}, x_i\}: i=1,2,\ldots,k\big\} \cup \big\{\{z_{i-1}, z_i\}: i=1,2,\ldots,m\big\}\\
                               &\qquad   \cup  \big\{\{y^{(1)}_{i-1}, y^{(1)}_i\}: i=1,2,\ldots,l\big\}    \cup  \big\{\{y^{(2)}_{i-1}, y^{(2)}_i\}: i=1,2,\ldots,l\big\}.
                                  \end{align*}
Let $d_L$ be a normalized geodesic metric on $V(L)$ and let 
$C_0$ be the cycle in $L$ generated by the paths $(y^{(1)}_i)_{i=0}^l$ and  $(y^{(2)}_i)_{i=0}^l$.
Note that 
$$c_0:=\length_{d_C}(C_0)= 2\sum_{j=1}^l d_L(y^{(1)}_{j-1}, y^{(1)}_j)= 2\sum_{j=1}^l d_L(y^{(2)}_{j-1}, y^{(2)}_j).$$

We require that 
\begin{equation} \label{E:4.1.1}
d_L(y^{(\sigma)}_{j-1},y^{(\sigma)}_j)\le \frac{c_0}4\le\frac12, \text{ for $j=1,2,\ldots l$, $\sigma=1,2$.}
\end{equation}

Let $\nu_L$ be the probability on $E(L)$ defined in Example \ref{Ex:2.4.1} (c).

For $n\in \N$ we abbreviate the $n$-th slash power of the measured normalized geodesic $s$-$t$- graph $(L,d_L,\nu_L)$ by $(L_n, d_n,\nu_n)$.

We write each $e\in E(L_n)$ as $e=e_1\oslash e_2\oslash\ldots e_n$, with $e_1,e_2,\ldots,e_n\in E(L)$  and each $v\in V(L_n)$
as $v=e_1\oslash e_2\oslash\ldots e_{n-1} \oslash u$ with $e_1,e_2,\ldots,e_{n-1}\in E(L)$ and $u\in V(L)$. For $e=e_1\oslash e_2\oslash\ldots\oslash e_n\in E(L_n)$ we put $e|_{[2,n]}=e_2\oslash e_3\ldots\oslash e_n\in E(L_{n-1})$
and $e|_{[1,n-1]}=e_1\oslash e_2\ldots\oslash e_{n-1}$.

For $n\in\N$ we put
$$\cC^{(n)}=\big\{ C=(V(C),E(C)): C \text{ is a cycle in $L_n$ of metric length $c_0$}\big\}, $$
which are the cycles of largest metric length in $L_n$,
and for  $e\in E(L_n)$ let
$$\cC^{(n)}_e=\big\{ C\in\cC^{(n)}: e\in E(C)\big\}.$$

We deduce from Lemma \ref{L:2.3.2},  Lemma \ref{L:3.1}  and \eqref{E:4.1.1}  the following Corollary.

\begin{cor}\label{C:4.1} Let $(T,d_T)$ be a geodesic tree, $n\in\N$  and  $\Psi: V(L_n)\to V(T)$ be an expansive map. Then for every $C\in \cC^{(n)}$ there is an edge $e_C\in E(C)$ for which
 $$d_T(\Psi(e_C))\ge \frac{c_0-d_n(e_C)}8\ge  \frac{3c_0}{32}.$$
\end{cor}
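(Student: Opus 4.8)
The plan is to assemble Corollary \ref{C:4.1} almost immediately from the three ingredients cited just before its statement, so the proof is short and the only real content is tracking how the various isometric-subgraph identifications interact with expansiveness. First I would fix a cycle $C\in\cC^{(n)}$, so that $C$ is a cycle in $L_n$ of metric length exactly $c_0$. By Lemma \ref{L:2.3.2}(1) (applied to the geodesic $s$-$t$ graph $(L_n,d_n)$, which is a slash power and hence an $s$-$t$ graph), the cycle $C$ together with the induced geodesic metric $d_C$ (the metric generated by the restriction of the weights, i.e. by $e\mapsto d_n(e)$ on $E(C)$) is an isometric subgraph of $(L_n,d_n)$. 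Thus $d_C$ agrees with $d_n$ on $V(C)$, and in particular $\length_{d_C}(C)=c_0$ and $d_C(e)=d_n(e)$ for each $e\in E(C)$.

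Next I would restrict the expansive map: since $d_C = d_n|_{V(C)}$ and $\Psi\colon V(L_n)\to V(T)$ is expansive, the restriction $\Psi|_{V(C)}\colon (V(C),d_C)\to(V(T),d_T)$ is still expansive. Now I apply Lemma \ref{L:3.1} to the geodesic cycle $(V(C),E(C),d_C)$, whose length is $c_0$, and to the geodesic tree $(T,d_T)$ with the expansive map $\Psi|_{V(C)}$. Lemma \ref{L:3.1} produces an edge $e_C=\{u,v\}\in E(C)$ with
\[
d_T(\Psi(u),\Psi(v))\ \ge\ \frac{c_0-d_C(u,v)}{8}\ =\ \frac{c_0-d_n(e_C)}{8}.
\]
Writing $d_T(\Psi(e_C))$ for $d_T(\Psi(u),\Psi(v))$ as in the paper's edge-notation convention, this is the first claimed inequality.

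Finally I would bound $d_n(e_C)$ from above. Each edge of $L_n$ has the form $e_1\oslash\cdots\oslash e_n$ with $e_i\in E(L)$ and $d_n(e_1\oslash\cdots\oslash e_n)=\prod_{i=1}^n d_L(e_i)$. Since $C$ has metric length $c_0=\length_{d_C}(C_0)$, the same value as the defining cycle of $L$, one expects $C$ to sit inside a copy of the ``Laakso-part'' at every level, so that at least one of the factors $d_L(e_i)$ is one of the $y^{(\sigma)}$-edge weights, which by \eqref{E:4.1.1} is at most $c_0/4\le 1/2$, while all other factors are at most $1$; hence $d_n(e_C)\le c_0/4$. (If the bookkeeping instead gives $d_n(e_C)\le 1/4$ directly, the conclusion is the same since $c_0\le 2$.) Plugging $d_n(e_C)\le c_0/4$ into the displayed inequality yields
\[
d_T(\Psi(e_C))\ \ge\ \frac{c_0-c_0/4}{8}\ =\ \frac{3c_0}{32},
\]
which is the second claimed inequality. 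The main obstacle is the last step: one must argue carefully that an edge of a metric-length-$c_0$ cycle in $L_n$ really does have $d_n$-weight at most $c_0/4$ — this requires knowing that a maximal-length cycle in a slash power of a balanced Laakso graph must, at some coordinate, traverse one of the long ``parallel'' branches $(y^{(\sigma)}_i)$ rather than only the series segments $(x_i),(z_i)$, and then invoking \eqref{E:4.1.1}. Everything else is a direct concatenation of Lemma \ref{L:2.3.2}, Lemma \ref{L:3.1}, and the normalization $c_0\le 2$.
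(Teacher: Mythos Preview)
Your proposal is correct and follows exactly the route the paper indicates: Lemma~\ref{L:2.3.2}, Lemma~\ref{L:3.1}, and \eqref{E:4.1.1}. The one place you hedge is easily settled: every edge $e=e_1\oslash\cdots\oslash e_n$ of a cycle $C\in\cC^{(n)}$ has its \emph{first} coordinate $e_1\in E(C_0)$ (this is the content of Proposition~\ref{P:4.1}(c), or simply observe that a cycle of metric length $c_0$ cannot sit inside a single copy $e_1\oslash L_{n-1}$ since $d_L(e_1)<1$), so \eqref{E:4.1.1} gives $d_n(e)=\prod_i d_L(e_i)\le d_L(e_1)\le c_0/4$ directly --- it is specifically the first factor, not merely ``some coordinate,'' that is forced into $E(C_0)$.
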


\begin{prop}\label{P:4.1} Let $n\in\N$.
\begin{enumerate}
\item[a)] For every $C\in \cC^{(n)}$ it follows that $|E(C)|= 2 l (k+l+m)^{n-1}.$
\item[b)] $\big|\cC^{(n)}\big|= 2^{2l{\frac{(k+l+m)^{n-1}-1}{k+l+m -1}}}$.\\
 In particular, using the formula for geometric series, we deduce for $n\ge 2$,
that  $\big|\cC^{(n)}\big|= \big|\cC^{(n-1)}\big| 2^{2l(k+l+m)^{n-2}}$.

\item[c)] Let $n\ge 2$ and $e=e_1\oslash e_2\oslash\ldots\oslash e_{n-1}\oslash e_n\in E(L_n)$. If $e_1\in E(C_0)$, then 
$$\big|\cC^{(n)}_e|=\big|\cC^{(n-1)}_{e|_{[1,n-1]} }\big|\cdot \begin{cases} 2^{2l (k+l+m)^{n-2}-1} &\text{ if $e_n\in E(C_0)$,}\\
 2^{2l (k+l+m)^{n-2}} &\text{ if $e_n\in E(L)\setminus  E(C_0)$.}\end{cases}$$
 If $e_1\in E(L)\setminus E(C)$, then $\cC^{(n)}_e=\emptyset$.
\end{enumerate}
\end{prop}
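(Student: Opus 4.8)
The plan is to prove all three parts by the same recursive unpacking of a cycle of maximal metric length in a slash power, using the inductive definition $L_n = L \oslash L_{n-1}$ together with the structural description of $s$-$t$ paths in a slash product from Proposition \ref{P: 2.7.1} and Lemma \ref{L:2.3.2}. The key observation is that a cycle $C$ in $L_n$ of metric length $c_0$ must project down onto a cycle of metric length $c_0$ in the base copy of $L$ (namely $C_0$, since $C_0$ is the unique cycle in $L$ and, by normalization, $c_0$ is the largest possible metric length a cycle can have inside any slash power). Concretely, $C$ lives over the two $y$-paths $(y^{(1)}_i)$ and $(y^{(2)}_i)$ in the outermost factor; over each of the $2l$ edges of $C_0$ one must insert a full $s$-$t$ path of $L_{n-1}$, and for $C$ to still be a cycle of metric length $c_0$ (rather than something longer or a non-cycle) these inserted $s$-$t$ paths are forced to be \emph{straight} $s$-$t$ geodesics of $L_{n-1}$, i.e. concatenations of single edges going straight through — except that over each inserted copy the choice of which side of each internal cycle to follow is free. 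This is exactly the content that will make the counting recursive.

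First I would set up notation: identify $E(C_0)$ (the $2l$ edges $\{y^{(\sigma)}_{i-1},y^{(\sigma)}_i\}$) and note from \eqref{E:4.1.1} that each contributes metric length $\le c_0/4$, so a cycle of metric length $c_0$ cannot "double back" within a single inserted copy. For part (a), I would argue that any $C \in \cC^{(n)}$ is obtained by choosing, for each of the $2l$ edges $f$ of $C_0$, a straight $s$-$t$ path $P_f$ of $L_{n-1}$ (graph length $(k+l+m)^{n-1}$ by induction, since an $s$-$t$ path in $L$ has graph length $k+l+m$ — going through one of the two $y$-branches of length $l$ plus the $x$-stem of length $k$ and $z$-stem of length $m$), and concatenating along $C_0$; hence $|E(C)| = 2l \cdot (k+l+m)^{n-1}$. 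The base case $n=1$ is $|E(C_0)| = 2l$.

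For part (b), I would count: each of the $2l$ inserted $s$-$t$ paths of $L_{n-1}$ is a straight path, and a straight $s$-$t$ path of $L_{n-1}$ is itself built by choosing a branch at each of the $(k+l+m)^{n-2}$-many\footnote{more precisely, the number of internal cycles met along a straight path} copies of the cycle encountered — so the number of straight $s$-$t$ paths of $L_m$ is $2^{(\text{number of cycles on a straight } s\text{-}t \text{ path of } L_m)}$, which a short induction identifies with $2^{l\frac{(k+l+m)^{m-1}-1}{k+l+m-1}}$ (a path meets $l$ cycles at level one, and recursively $(k+l+m)$ times as many at each deeper level). Raising to the $2l$-th power gives $|\cC^{(n)}| = 2^{2l\frac{(k+l+m)^{n-1}-1}{k+l+m-1}}$, and the multiplicative recursion $|\cC^{(n)}| = |\cC^{(n-1)}|\,2^{2l(k+l+m)^{n-2}}$ follows immediately from the geometric-series identity $\frac{r^{n-1}-1}{r-1} = \frac{r^{n-2}-1}{r-1} + r^{n-2}$. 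For part (c), writing $e = e_1\oslash\cdots\oslash e_n$ with $e_1\in E(C_0)$: the outer factor $e_1$ lies on $C_0$, so it is forced to lie on the projection of $C$; the remaining data $e_2\oslash\cdots\oslash e_n \in E(L_{n-1})$ must lie on the particular inserted straight $s$-$t$ path $P_{e_1}$ of $L_{n-1}$, while the other $2l-1$ inserted paths range freely. This gives a factor $|\cC^{(n-1)}_{e|_{[2,n]}}|$ — wait, here I should be careful that the intended recursion in the statement is on $e|_{[1,n-1]}$, not $e|_{[2,n]}$; so instead I would peel from the \emph{inside}: view $L_n = L_{n-1}\oslash L$ via associativity (Proposition \ref{P:2.7.1}), so $e|_{[1,n-1]}\in E(L_{n-1})$ records all but the innermost factor, and a cycle $C\in\cC^{(n)}$ restricts to a cycle $C'\in\cC^{(n-1)}$ on the $L_{n-1}$-coordinates with $e|_{[1,n-1]}\in E(C')$ iff $e_n\in E(C_0)$ relative to the innermost copy. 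If $e_n\in E(C_0)$, the innermost factor is pinned to lie on $C_0$ and the remaining freedom in the innermost level is a cycle through a fixed edge of $C_0$, contributing $2^{2l(k+l+m)^{n-2}-1}$ (one of the $2l$ $y$-edges is fixed, halving the count relative to part (b)'s per-level factor); if $e_n\notin E(C_0)$, then $e_n$ lies on the $x$- or $z$-stem, which every straight $s$-$t$ path traverses, so no constraint is added beyond $C'$ existing, giving the full factor $2^{2l(k+l+m)^{n-2}}$. If $e_1\in E(L)\setminus E(C_0)$ (outermost factor on a stem), no cycle of metric length $c_0$ can use it since such cycles live entirely over $C_0$ in the outer coordinate, so $\cC^{(n)}_e=\emptyset$.

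\textbf{Main obstacle.} The delicate point — and the step I expect to take the most care — is the \emph{rigidity} claim in part (a): that a cycle of metric length exactly $c_0$ in $L_n$ forces every inserted $s$-$t$ subpath of $L_{n-1}$ to be a straight geodesic and forces the outer-coordinate projection to be exactly $C_0$. This needs \eqref{E:4.1.1} (so no single inserted copy can carry enough metric length to close a shorter detour), the normalization that all $s$-$t$ paths have metric length $1$ (so longer cycles cannot arise from "going the wrong way"), and Lemma \ref{L:2.3.2}(1) to pin down that any cycle sits inside a generalized Laakso subgraph whose own cycle must coincide, metrically, with the scaled copy of $C_0$. Getting this argument clean — and making the "number of cycles met along a straight $s$-$t$ path" bookkeeping in part (b) precise by induction — is where the real work lies; parts (b) and (c) are then essentially geometric-series arithmetic once (a) and the rigidity are in hand.
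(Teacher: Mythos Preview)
Your approach is essentially the paper's: both argue by induction on $n$ via the recursive structure of slash powers, counting how many ways a cycle of length $c_0$ at level $n$ arises from one at level $n-1$ by inserting $s$-$t$ paths of $L$. The one presentational difference is that the paper peels from the \emph{inside} throughout (writing $L_n = L_{n-1}\oslash L$ and replacing each edge of $C'\in\cC^{(n-1)}$ by one of the two $s$-$t$ paths of $L$), which makes the recursion on $e|_{[1,n-1]}$ in part (c) immediate --- you eventually arrive there too, after your self-correction.

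On your ``main obstacle'': you are right that the paper takes the rigidity (every $C\in\cC^{(n)}$ arises by this edge-replacement procedure) entirely for granted, but you overestimate what is needed for it. Condition \eqref{E:4.1.1} plays no role here. The relevant structural fact is simply that in $H\oslash G$ any cycle either lies in a single copy $e\oslash G$ (hence has metric length $d_H(e)\cdot c_0 < c_0$) or projects onto a cycle in $H$ with an $s$-$t$ path of $G$ inserted over each edge; since the Laakso graph $L$ has exactly two $s$-$t$ paths, both of metric length $1$, the latter case automatically gives metric length $c_0$. So the rigidity is a clean dichotomy for cycles in slash products of $s$-$t$ graphs, not a delicate metric argument. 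Your sentence ``$e|_{[1,n-1]}\in E(C')$ iff $e_n\in E(C_0)$'' in part (c) is garbled; the correct statement is that $e\in E(C)$ requires $e|_{[1,n-1]}\in E(C')$ \emph{and} $e_n$ lies on the chosen $s$-$t$ path over that edge --- which your subsequent case analysis handles correctly.
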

\begin{proof} (a) For $n=1$ the claim is clear. Assuming the claim is true for some $n\ge 1$. Then the claim for $n+1$ follows from the fact
that one obtains an element of $\cC^{(n+1)}$ by taking an element $C$ from $\cC^{(n)}$ and replacing each edge of $C$ by a path from $s(L)$ to $t(L)$  in $L$, whose
graph length is  $k+l+m$.

\noindent(b) Again for $n=1$, the claim is clear. Assuming the claim is true for $n$, then  the fact that an element of $C^{(n+1)}$ is obtained
by starting with a cycle  $C$ in $\cC^{(n)}$ and replacing each edge of  $C$ either by the  path $(x_i)_{i=0}^k\smile (y^{(1)}_i)_{i=1}^l\smile(z_i)_{i=0}^m$
or the  path $(x_i)_{i=0}^k\smile (y^{(2)}_i)_{i=0}^l\smile(z_i)_{i=0}^m$. This means that for each 
$C$ in $\cC^{(n)}$ there are $2^{|E(C)|}$ possibilities to extend $C$ to an element of $\cC^{(n+1)}$. Thus by
(a) and the induction hypothesis
$$\big|\cC^{(n+1)}\big |= \big|\cC^{(n)}\big| 2^{ 2 l (k+l+m)^{n-1}} = 2^{2l{\frac{(k+l+m)^{n-1}-1}{k+l+m -1}}+2 l (k+l+m)^{n-1} }=2^{2l\frac{(k+l+m)^{n}-1}{k+l+m -1}}.$$

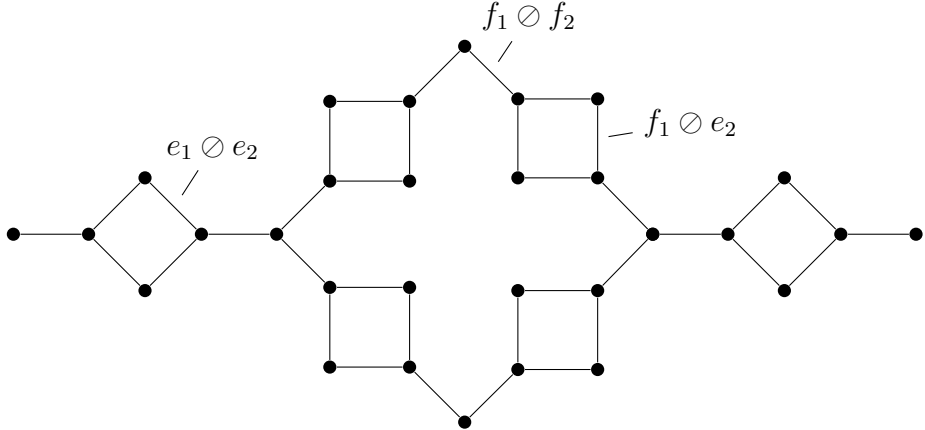
\begin{figure}[h]
\centering

\begin{tikzpicture}[roundnode/.style={circle, fill=black, inner sep=0pt, minimum size=1.75mm}]

\node (a) at (2.65,1.15) {$e_1\oslash e_2$};
\node (b) at (2.15,.375) {};

\node (c) at (6.85,2.9) {$f_1\oslash f_2$};
\node (d) at (6.35,2.15) {};

\node (e) at (9,1.475) {$f_1\oslash e_2$};
\node (f) at (7.7675,1.275) {};

\draw[-] (a) to (b);
\draw[-] (c) to (d);
\draw[-] (e) to (f);

\node[roundnode] (x0) at (0,0) {};
\node[roundnode] (x1) at (1,0) {};
\node[roundnode] (x2) at (1.75,.75) {};
\node[roundnode] (x3) at (1.75,-.75) {};
\node[roundnode] (x4) at (2.5,0) {};
\node[roundnode] (x5) at (3.5,0) {};

\node[roundnode] (w1) at (4.207,.707) {};
\node[roundnode] (w2) at (5.2675,.707) {};
\node[roundnode] (w3) at (4.207,1.7675) {};
\node[roundnode] (w4) at (5.2675,1.7675) {};
\node[roundnode] (w5) at (6,2.5) {};

\node[roundnode] (y1) at (4.207,-.707) {};
\node[roundnode] (y2) at (5.2675,-.707) {};
\node[roundnode] (y3) at (4.207,-1.7675) {};
\node[roundnode] (y4) at (5.2675,-1.7675) {};
\node[roundnode] (y5) at (6,-2.5) {};

\node[roundnode] (a1) at (6.707,1.8) {};
\node[roundnode] (a2) at (7.7675,1.8) {};
\node[roundnode] (a3) at (6.707,.75) {};
\node[roundnode] (a4) at (7.7675,.75) {};
\node[roundnode] (a5) at (8.5,0) {};

\node[roundnode] (b1) at (6.707,-1.8) {};
\node[roundnode] (b2) at (7.7675,-1.8) {};
\node[roundnode] (b3) at (6.707,-.75) {};
\node[roundnode] (b4) at (7.7675,-.75) {};
\node[roundnode] (b5) at (8.5,0) {};

\node[roundnode] (z1) at (9.5,0) {};
\node[roundnode] (z2) at (10.25,.75) {};
\node[roundnode] (z3) at (10.25,-.75) {};
\node[roundnode] (z4) at (11,0) {};
\node[roundnode] (z5) at (12,0) {};

\draw[-] (x0) -- (x1);
\draw[-] (x1) -- (x2);
\draw[-] (x1) -- (x3);
\draw[-] (x3) -- (x4);
\draw[-] (x2) -- (x4);
\draw[-] (x4) -- (x5);
\draw[-] (x5) -- (w1);
\draw[-] (x5) -- (y1);

\draw[-] (w1) -- (w2);
\draw[-] (w1) -- (w3);
\draw[-] (w2) -- (w4);
\draw[-] (w3) -- (w4);
\draw[-] (w4) -- (w5);
\draw[-] (w5) -- (a1);

\draw[-] (y1) -- (y2);
\draw[-] (y1) -- (y3);
\draw[-] (y2) -- (y4);
\draw[-] (y3) -- (y4);
\draw[-] (y4) -- (y5);
\draw[-] (y5) -- (b1);

\draw[-] (a1) -- (a2);
\draw[-] (a1) -- (a3);
\draw[-] (a2) -- (a4);
\draw[-] (a3) -- (a4);
\draw[-] (a4) -- (a5);

\draw[-] (b1) -- (b2);
\draw[-] (b1) -- (b3);
\draw[-] (b2) -- (b4);
\draw[-] (b3) -- (b4);
\draw[-] (b4) -- (b5);
\draw[-] (b5) -- (z1);

\draw[-] (z1) -- (z2);
\draw[-] (z1) -- (z3);
\draw[-] (z2) -- (z4);
\draw[-] (z3) -- (z4);
\draw[-] (z4) -- (z5);

\end{tikzpicture}

        \caption{The second slash power of a $(1,2,2,1)$-Laakso Graph. Note that for the three labeled edges $\left|\mathcal{C}^{(2)}_{e_1\oslash e_2}\right|=0$, $\left|\mathcal{C}^{(2)}_{f_1\oslash f_2}\right|=16=2^4$, and $\left|\mathcal{C}^{(2)}_{f_1\oslash e_2}\right|=8=2^3$.}
        \end{figure}

 \noindent(c) If $n=2$ (see Figure 3 above), and $(e_1\oslash e_2)\in E( L_2)$ with  $e_1\in E(C_0)$, then a cycle $C\in \cC^{(2)} $, contains $e_1\oslash e_2$ if it is obtained 
from   $C_0$ (which is the only element of $\cC^{(1)}$) by replacing each edge by the path   $(x_i)_{i=0}^k\smile (y^{(1)}_i)_{i=1}^l\smile(z_i)_{i=0}^m$
or the  path $(x_i)_{i=0}^k\smile (y^{(2)}_i)_{i=1}^l\smile(z_i)_{i=0}^m$. If $e_2$ is an edge of the path $(x_i)_{i=0}^k$ or $(z_i)_{i=0}^m$ this can be done in 
$2^{|E(C_0)|}=2^{2l}$ ways. But in the case that $e_2$ is an edge of either  the path  $(y^{(1)}_i)_{i=1}^l$ or  $(y^{(2)}_i)_{i=1}^l$, then 
there are only $2^{|E(C_0)|-1}=2^{2l-1}$ ways to do such. If $e_1\in E(L)\setminus E(C)$, then $e_1\oslash e_2$ cannot be an edge of any $C\in \cC^{(2)}$.

Assuming now that our claim is true for some $n\ge 2$ and $e=e_1\oslash e_2\oslash \ldots\oslash  e_{n+1}\in\E( L_{n+1})$, we can proceed similarly 
to verify the claim for $n+1$. Indeed, we obtain the  elements $C$ of $\cC^{(n+1)}_{e}$, by replacing  each edge of an element 
$C'\in \cC^{(n)}_{e|_{[1,e_{n}]}}$ by  the  path $(x_i)_{i=0}^k\smile (y^{(1)}_i)_{i=0}^l\smile(z_i)_{i=0}^m$
or the  path $(x_i)_{i=0}^k\smile (y^{(2)}_i)_{i=0}^l\smile(z_i)_{i=0}^m$. Again, if $e_n$ is an edge of the path $(x_i)_{i=0}^k$ or $(z_i)_{i=0}^m$ this can be done in 
$2^{|E(C')|}=2^{2l  (k+l+m)^{n-1}}$ ways. But in the case that $e_2$ is an edge of either  the path  $(y^{(1)}_i)_{i=0}^l$ or  $(y^{(2)}_i)_{i=0}^l$, then 
there are only $2^{|E(C')|-1}=2^{2l (k+l+m)^{n-1}-1}$ ways to do such. As in the case $n=2$, it follows that if $e_1\in E(L)\setminus E(C)$, then $\cC_e^{(n+1)}=\emptyset$
since $\cC^{(n)}_{e|_{[1,n]}}=\emptyset$, by our induction hypothesis.
\end{proof}

\begin{cor}\label{C:4.2} 
Let $n\in\N$ and assume that $\phi$ is a map from $\cC^{(n)}$ to $E(L_n)$, with the  property that  $\phi_1(C)\in C_0$, where we write $\phi(C)$ as 
$\phi(C)=\phi_1(C)\oslash\phi_2(C)\oslash\ldots\oslash\phi_n(C)$, with $\phi_j(C)\in E(L)$, $j=1,2,\ldots,n$. 

Then 
\begin{align}\label{E:4.1.2}
 \sum_{C\in \cC^{(n)}} \frac1{|\cC^{(n)}_{\phi{(C)}}|}\frac{\nu_n(\phi(C))}{d_n(\phi(C))}=\frac12     .                                                                                                                                                              \end{align} 
\end{cor}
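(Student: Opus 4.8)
The plan is to show that every summand in \eqref{E:4.1.2} collapses to the single constant $\tfrac1{2|\cC^{(n)}|}$, independently of $C$ and, in fact, of the particular map $\phi$ — only the hypothesis $\phi_1(C)\in E(C_0)$ is used — so that summing over the $|\cC^{(n)}|$ elements of $\cC^{(n)}$ yields $\tfrac12$. Two exact formulas drive this, both expressed through the single integer $r(e):=\big|\{j\in\{1,\dots,n\}:e_j\in E(C_0)\}\big|$ attached to an edge $e=e_1\oslash e_2\oslash\cdots\oslash e_n\in E(L_n)$: one for the ratio $\nu_n(e)/d_n(e)$, and one for the cardinality $|\cC^{(n)}_e|$.

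First I would record, directly from the definition of $\nu_L$ in Example~\ref{Ex:2.4.1}(c) together with the normalization $d_L(s(L),t(L))=1$, that $\nu_L(e)/d_L(e)$ equals $1$ when $e\in E(L)\setminus E(C_0)$ and equals $\tfrac12$ when $e\in E(C_0)$. Since $\nu_n$ and $d_n$ are multiplicative over the coordinates $e_1,\dots,e_n$ by the inductive definition of $d_{L^{\oslash n}}$ and $\nu_{L^{\oslash n}}$, this gives
\[
\frac{\nu_n(e)}{d_n(e)}=\prod_{j=1}^n\frac{\nu_L(e_j)}{d_L(e_j)}=2^{-r(e)},\qquad e\in E(L_n).
\]

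Next I would unfold the recursion in Proposition~\ref{P:4.1}(c). For any $e$ with $e_1\in E(C_0)$, iterating (c) down the slash-power ladder to the base case $|\cC^{(1)}_{e_1}|=1$ (valid since $\cC^{(1)}=\{C_0\}$ by Proposition~\ref{P:4.1}(b)) produces
\[
|\cC^{(n)}_e|=\Big(\prod_{j=2}^n 2^{\,2l(k+l+m)^{j-2}}\Big)\cdot 2^{-\big|\{\,2\le j\le n:\ e_j\in E(C_0)\,\}\big|}.
\]
Here the first factor equals $2^{\,2l\sum_{i=0}^{n-2}(k+l+m)^i}=2^{\,2l\frac{(k+l+m)^{n-1}-1}{k+l+m-1}}=|\cC^{(n)}|$ by the geometric series identity and Proposition~\ref{P:4.1}(b), while the exponent of the second factor is $r(e)-1$ because $e_1\in E(C_0)$ is not counted among $j\ge 2$. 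Hence $|\cC^{(n)}_e|=|\cC^{(n)}|\,2^{-(r(e)-1)}$; in particular it is positive, so each summand in \eqref{E:4.1.2} is well defined.

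Finally, for each $C\in\cC^{(n)}$ the hypothesis $\phi_1(C)\in E(C_0)$ lets me apply both formulas with $e=\phi(C)$ (which forces $r(\phi(C))\ge 1$), giving
\[
\frac{1}{|\cC^{(n)}_{\phi(C)}|}\cdot\frac{\nu_n(\phi(C))}{d_n(\phi(C))}
=\frac{2^{\,r(\phi(C))-1}}{|\cC^{(n)}|}\cdot 2^{-r(\phi(C))}=\frac{1}{2|\cC^{(n)}|},
\]
and summing over the $|\cC^{(n)}|$ cycles in $\cC^{(n)}$ yields exactly $\tfrac12$. I expect the only genuine work to be the bookkeeping in unfolding Proposition~\ref{P:4.1}(c) and matching $\sum_{j=2}^n(k+l+m)^{j-2}$ to the exponent in Proposition~\ref{P:4.1}(b); the one subtlety to flag is that $\phi(C)$ need not lie in $E(C)$, but this plays no role — everything hinges solely on $\phi_1(C)\in E(C_0)$, which is precisely what makes each term independent of $C$.
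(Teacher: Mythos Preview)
Your proof is correct and uses essentially the same ingredients as the paper: the multiplicativity of $\nu_n/d_n$ together with the recursion of Proposition~\ref{P:4.1}(c), matched against the formula for $|\cC^{(n)}|$ in Proposition~\ref{P:4.1}(b). The only difference is packaging: the paper peels off one coordinate at a time from the sum, whereas you first derive the closed forms $\nu_n(e)/d_n(e)=2^{-r(e)}$ and $|\cC^{(n)}_e|=|\cC^{(n)}|\,2^{-(r(e)-1)}$ and thereby make explicit the slightly stronger fact that every summand equals $\tfrac{1}{2|\cC^{(n)}|}$.
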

\begin{proof} From the inductive definition of $\nu_n(e)$ and $d_n(e)$ we deduce that
\begin{align*}
  \sum_{C\in \cC^{(n)}} \frac1{|\cC^{(n)}_{\phi(C)}|}\frac{\nu_n(\phi(C))}{d_n(\phi(C))}
&= \sum_{C\in \cC^{(n)} }\frac1{|\cC^{(n)}_{\phi(C)}|} \frac{\nu_{n-1}( \phi(C)|_{[1,n-1]})}{d_{n-1} ( \phi(C)|_{[1,n-1])})} \frac{\nu_1(\phi_n(C))}{d_1(\phi_n(C))}
\intertext{which, by Proposition \ref{P:4.1}, and the fact that $\frac{\nu_1(\phi_n(C))}{d_1(\phi_n(C))}=\frac12\iff \phi_n(C)\in E(C)$ and 
$\frac{\nu_1(\phi_n(C))}{d_1(\phi_n(C))}=1\iff \phi_n(C)\in E(L)\setminus E(C)$, in the case of $n\ge 2$, equals to}
&= \sum_{C\in \cC^{(n)} } \frac1{|\cC^{(n-1)}_{\phi(C)|[1,n-1]}|}\frac{\nu_{n-1}( \phi(C)|_{[1,n-1]})}{d_{n-1} ( \phi(C)|_{[1,n-1])})}2^{-2l (k+l+m)^{n-2}}
\intertext{Repeating  the same argument as in the second equation this equals to }
\sum_{C\in \cC^{(n)} }&\frac1{|\cC^{(n-2)}_{\phi(C)|[1,n-2]}|} \frac{\nu_{n-2}( \phi(C)|_{[1,n-2]})}{d_{n-2} ( \phi(C)|_{[1,n-2])})}   2^{-2l(k+l+m)^{n-3}-2l (k+l+m)^{n-2}}.\\
\intertext{Iterating this argument we finally obtain}
\sum_{C\in \cC^{(n)} }&\frac1{|\cC^{(1)}_{\phi_1(C)}| } \frac{\nu_1(\phi_1(C))}{d_1(\phi_1(C))}2^{-2l  \sum_{j=1}^{n-2}(k+l+m)^j}\\
  &=  \frac12 \sum_{C\in \cC^{(n)} } 2^{-2l\frac{(k+l+m)^{n-1}-1}{k+l+m-1}}=\frac12  \text{ (by Proposition \ref{P:4.1} (b))}
  \end{align*}
which verifies our statement.
\end{proof}

The following observation follows directly from the definition of the metric $d_{H\oslash G}$ for two geodesic $s$-$t$ graphs 
$H$ and $G$.
\begin{prop}\label{P:4.2}  For each $e_1\in E(L)$, and $n\ge 2$  the map
$$\Phi_{e_1}: V( L_{n-1})\to  V( L_{n}), \quad e_2\oslash e_3\oslash \ldots \oslash e_{n-1}\oslash v\mapsto e_1\oslash e_2\oslash e_3\oslash \ldots \oslash e_{n-1}\oslash v$$
is  graph isomorphism
 and 
 $$d_n\big(\Phi_{e_1}(u),\Phi_{e_1}(v)\big)= d_1(e_1) d_{n-1}(u,v) \text{ for $u,v\in V( L_{n-1})$}.$$
\end{prop}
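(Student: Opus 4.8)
The plan is to recognize $\Phi_{e_1}$ as the canonical embedding of $L_{n-1}$ onto a single ``bundle copy'' inside $L_n=L\oslash L_{n-1}$ and then to show this copy is an isometric subgraph, so that the rescaled intrinsic metric of $L_{n-1}$ is exactly the restriction of $d_n$. Concretely, by the inductive definition of slash powers $L_n=L\oslash L_{n-1}$, and after the associativity identifications of Proposition \ref{P:2.7.1} the map $\Phi_{e_1}$ coincides with the embedding $\psi_{e_1}\colon v\mapsto e_1\oslash v$ from the Remark following the definition of the slash product; hence $\Phi_{e_1}$ is a graph isomorphism of $L_{n-1}$ onto the subgraph $e_1\oslash L_{n-1}$ of $L_n$. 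This settles the first assertion and reduces the metric assertion to showing that $d_n$, restricted to $e_1\oslash L_{n-1}$, equals the geodesic metric that the edge weights of $L_n$ induce on that subgraph.

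Those weights are $w_{L_n}(e_1\oslash f)=d_1(e_1)\cdot d_{n-1}(f)$ by definition of $d_{L_n}$, and since $d_{n-1}$ is itself the geodesic metric generated by the weight function $f\mapsto d_{n-1}(f)$ on $L_{n-1}$, rescaling every weight by the fixed constant $d_1(e_1)$ shows that the geodesic metric these weights generate on $e_1\oslash L_{n-1}$ is $d_1(e_1)\cdot d_{n-1}$, transported through $\psi_{e_1}$ --- the routine scaling property of geodesic metrics. It then remains to verify that $e_1\oslash L_{n-1}$ is an isometric subgraph of $L_n$. For this I would fix a directed $s$-$t$ path $\pi=(z_0,\dots,z_r)$ of the base graph $L$ passing through $e_1$, which exists because $L$ is a directed $s$-$t$ graph. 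Since $\pi$ is an $s$-$t$ subgraph of $L$, Proposition \ref{P:2.7.3} (together with Lemma \ref{L:2.3.2}) shows that $\pi\oslash L_{n-1}$ is an isometric subgraph of $L_n$, so it suffices to compare the intrinsic metric of $e_1\oslash L_{n-1}$ with the one it inherits inside $\pi\oslash L_{n-1}$. Because $e_1$ is a cut edge of the path $\pi$, deleting the interior vertices of the copy $e_1\oslash L_{n-1}$ splits $\pi\oslash L_{n-1}$ into two components, one joined to $e_1\oslash L_{n-1}$ only at $e_1^-\equiv e_1\oslash s(L_{n-1})$ and the other only at $e_1^+\equiv e_1\oslash t(L_{n-1})$. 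Hence a walk in $\pi\oslash L_{n-1}$ between two vertices of $e_1\oslash L_{n-1}$ can leave that copy only through $e_1^-$ or through $e_1^+$, and in each case must return through the same vertex; excising these closed excursions yields a walk of no greater length lying entirely in $e_1\oslash L_{n-1}$. Thus the intrinsic metric of $e_1\oslash L_{n-1}$ agrees with the restriction of $d_{\pi\oslash L_{n-1}}$, hence of $d_n$, and combining with the previous paragraph gives $d_n(\Phi_{e_1}(u),\Phi_{e_1}(v))=d_1(e_1)\,d_{n-1}(u,v)$.

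The bookkeeping of the slash-product identifications and the scaling property of geodesic metrics are routine. The step I expect to require the most care is the separation claim in the last paragraph: that the bundle copy $e_1\oslash L_{n-1}$ meets the remainder of $\pi\oslash L_{n-1}$ only at the two ``gates'' $e_1^-$ and $e_1^+$, so that excursions out of it are closed loops that may be deleted. This rests on $\pi$ being a path (making $e_1$ a cut edge) and on the fact that interior vertices of a bundle copy $e\oslash L_{n-1}$ are identified with no vertex outside that copy --- both immediate from the construction in Section 2.7 --- but it must be phrased cleanly. A small alternative that avoids Proposition \ref{P:2.7.3} is to run the same excursion-removal argument directly in $L_n$, using Proposition \ref{P:2.3.0} applied to a blown-up $s$-$t$ path of $L$ through $e_1$ to see that $d_n(e_1^-,e_1^+)=d_1(e_1)$, so that any detour between the two gates is at least as long as the in-copy route.
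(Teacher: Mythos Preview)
Your argument is correct. The paper does not actually give a proof of this proposition; it simply prefaces the statement with ``the following observation follows directly from the definition of the metric $d_{H\oslash G}$'' and moves on. What you have written is a careful unpacking of precisely that observation: the graph-isomorphism claim is the Remark after the definition of $\oslash$, and the metric claim boils down to showing $e_1\oslash L_{n-1}$ is an isometric subgraph of $L_n$, which you handle by the gate/excursion-removal argument.

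Your main route (embed into $\pi\oslash L_{n-1}$ via Proposition~\ref{P:2.7.3}, then argue $e_1$ is a cut edge of $\pi$) and your stated alternative (run the excursion argument directly in $L_n$, using $d_n(e_1^-,e_1^+)=d_1(e_1)$ from Proposition~\ref{P:2.3.0}) are both fine; the alternative is arguably closer in spirit to what the paper means by ``directly from the definition,'' since it avoids the detour through Proposition~\ref{P:2.7.3}. Either way, you have correctly identified and justified the separation claim --- that $e_1\oslash L_{n-1}$ meets the rest of $L_n$ only at $e_1^-$ and $e_1^+$ --- which is the only point needing any thought.
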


We are now ready to state the main result of this section.
\begin{thm}\label{T:4.1}
Let $n\in\N$,  $(T, d_T)$ a geodesic tree and  $\Psi: V(L_n)\to V(T)$ be  an expansive map.

Then 
$$\D_{\nu_n}(\Psi)= \E_{\nu_n}\Big(\frac{d_T(\Psi(e))}{d_n(e)}\Big)=\sum_{e=\{x,y\}\in E(L_n)} \frac{d_T(\Psi(e))}{d_n(e)} \nu_n(e)\ge \frac{3}{128}  c_0n.$$
\end{thm}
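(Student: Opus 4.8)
The plan is to argue by induction on $n$, showing that every additional layer of the slash power forces an additive increment of $\tfrac{3c_0}{128}$. Let $D_n=\inf\big\{\D_{\nu_n}(f):(T,d_T)\in\cT,\ f\colon V(L_n)\to V(T)\text{ expansive}\big\}$; it would suffice to prove $D_1\ge\tfrac{3c_0}{128}$ and $D_n\ge D_{n-1}+\tfrac{3c_0}{128}$ for $n\ge2$, since $\D_{\nu_n}(\Psi)\ge D_n$. The base case is immediate from Corollary \ref{C:4.1}: $L_1=L$ contains the cycle $C_0$ of metric length $c_0$, so there is an edge $e\in E(C_0)$ with $d_T(\Psi(e))\ge\tfrac{3c_0}{32}$, and because $\nu_1(e)=\tfrac12 d_1(e)$ for edges of $C_0$ we get $\D_{\nu_1}(\Psi)\ge\tfrac{d_T(\Psi(e))}{d_1(e)}\nu_1(e)=\tfrac12 d_T(\Psi(e))\ge\tfrac{3c_0}{64}\ge\tfrac{3c_0}{128}$.

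For the inductive step I would fix an expansive $\Psi\colon V(L_n)\to V(T)$ and use the identification $L_n=L\oslash L_{n-1}$. By Proposition \ref{P:4.2}, for each $e_1\in E(L)$ the subgraph $e_1\oslash L_{n-1}$ is an isometric copy of $L_{n-1}$ rescaled by $d_1(e_1)$, on which $\Psi$ restricts to an expansive map into $\big(V(T),\tfrac1{d_1(e_1)}d_T\big)$; applying the inductive hypothesis on each such copy and summing against $\nu_1$ (using $\nu_n(e_1\oslash e')=\nu_1(e_1)\nu_{n-1}(e')$ and $d_n(e_1\oslash e')=d_1(e_1)d_{n-1}(e')$) yields the ``free'' bound $\D_{\nu_n}(\Psi)\ge D_{n-1}$. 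To upgrade this by $\tfrac{3c_0}{128}$ I would bring in the top cycles: by Corollary \ref{C:4.1} every $C\in\cC^{(n)}$ carries an edge $e_C\in E(C)$ with $d_T(\Psi(e_C))\ge\tfrac{3c_0}{32}$, and since $e_C\in E(C)$ forces the first coordinate of $e_C$ to lie in $E(C_0)$ (Proposition \ref{P:4.1}(c)), the selector $\phi(C)=e_C$ satisfies the hypotheses of Corollary \ref{C:4.2}; using that $|\{C:e_C=e\}|\le|\cC^{(n)}_e|$ this gives
\begin{equation*}
\sum_{e\in\phi(\cC^{(n)})}\frac{d_T(\Psi(e))}{d_n(e)}\nu_n(e)\ \ge\ \sum_{C\in\cC^{(n)}}\frac1{|\cC^{(n)}_{e_C}|}\frac{d_T(\Psi(e_C))}{d_n(e_C)}\nu_n(e_C)\ \ge\ \frac{3c_0}{32}\cdot\frac12\ =\ \frac{3c_0}{64}.
\end{equation*}

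The point on which everything turns — and the step I expect to be the main obstacle — is that these two estimates cannot simply be added, because the stretched edges $\{e_C\}$ all sit inside the copies $e_1\oslash L_{n-1}$ with $e_1\in E(C_0)$, so they are already counted in the recursive bound, and deleting them costs more of the inductive estimate than the $\tfrac{3c_0}{64}$ it buys back. The resolution should exploit that $L$ is \emph{balanced}: each $e_C$ lies on one of the two equal-length branches of a Laakso cycle inside its copy, so for every $e_1\in E(C_0)$ one can re-route through complementary branches to obtain a sub-configuration of $e_1\oslash L_{n-1}$ that is again an isometric copy of $L_{n-1}$ — hence still realizing $d_1(e_1)D_{n-1}$ by induction — whose edge set is disjoint from $\{e_C\}$, so that the $\tfrac{3c_0}{64}$ carried by the $\{e_C\}$ becomes genuinely additional, at the cost of a factor $2$ coming from the overlap that cannot be fully avoided (which is why the per-layer increment is $\tfrac{3c_0}{128}$ rather than $\tfrac{3c_0}{64}$). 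Making this precise requires the edge–cycle incidence counts of Proposition \ref{P:4.1}(c), to ensure the re-routing can be performed simultaneously for all top cycles meeting a given copy without disturbing the multiplicities $|\cC^{(n)}_{e_C}|$ that feed Corollary \ref{C:4.2}, together with the bound $d_n(e_C)\le c_0/4$ from \eqref{E:4.1.1}, which is exactly what turned Corollary \ref{C:4.1} into the clean constant $\tfrac{3c_0}{32}$. I expect this bookkeeping, rather than any individual inequality, to be the substance of the argument; once it is in place, $D_n\ge D_{n-1}+\tfrac{3c_0}{128}$ follows and the induction closes.
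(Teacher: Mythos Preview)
Your diagnosis of the difficulty is accurate: the recursive bound $\D_{\nu_n}(\Psi)\ge D_{n-1}$ and the cycle bound $\ge\tfrac{3c_0}{64}$ overlap, and the whole proof lives in how you disentangle them. But your proposed resolution does not work. You claim that for each $e_1\in E(C_0)$ one can ``re-route through complementary branches to obtain a sub-configuration of $e_1\oslash L_{n-1}$ that is again an isometric copy of $L_{n-1}$'' and is disjoint from the selected edges $\{e_C\}$. There is no such sub-configuration: $e_1\oslash L_{n-1}$ is already (a rescaled) $L_{n-1}$, and no proper subgraph of $L_{n-1}$ is graph-isomorphic to $L_{n-1}$. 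Moreover the edges $e_C$ can land anywhere in $e_1\oslash L_{n-1}$, not just on one branch of a single Laakso cycle, so ``complementary branches'' does not even describe a well-defined avoidance move. The incidence counts of Proposition~\ref{P:4.1}(c) and Corollary~\ref{C:4.2} do not help here: they control multiplicities of the selector $\phi$, not the geometry of what is left over after you remove $\phi(\cC^{(n)})$ from a copy.

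The device the paper actually uses, and which your proposal is missing, is a truncation. One does not induct on $D_n$ but on the expectation of the capped ratio
\[
F_n(e)=\frac{1}{d_n(e)}\min\Big(d_T(\Psi(e)),\tfrac{3}{32}c_0\Big),
\]
proving $\E_{\nu_n}(F_n)\ge\tfrac{3}{128}c_0\,n$. The cap is exactly what lets a heavily stretched edge be counted twice without cheating: if $d_T(\Psi(e))\ge\tfrac{3}{32}c_0$ then $F_n(e)d_n(e)=\tfrac{3}{32}c_0$, half of which feeds the cycle estimate (your $\tfrac{3c_0}{64}$, producing the term $A\ge\tfrac{3}{128}c_0$ via Corollary~\ref{C:4.2}), while the remaining half still dominates $\min(d_T(\Psi(e)),\tfrac{3}{64}c_0)$ and so survives into the recursive term $B$. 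On $B$ one factors through the copies $e_1\oslash L_{n-1}$ as you describe; the point is that after dividing by $d_1(e_1)\le\tfrac12$ (this is where \eqref{E:4.1.1} enters) the cap $\tfrac{3}{64}c_0$ becomes $\ge\tfrac{3}{32}c_0$, so on each copy one recovers exactly $\E_{\nu_{n-1}}(F_{n-1})$ for the rescaled map, and the induction closes with $B\ge\tfrac{3}{128}c_0(n-1)$. In short, the ``factor $2$'' you anticipated is absorbed not by a combinatorial re-routing but by halving a cap and then restoring it via the edge-length bound \eqref{E:4.1.1}.
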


\begin{proof}
We consider the map
$$F_n: E(L_n)\to \R, \qquad e\mapsto \min\Big(\frac{d_T(\Psi(e))}{d_n(e)}, \frac{3}{32} \frac{c_0}{d_n(e)} \Big).$$
We will show by induction for all $n\in\N$ that 
\begin{align}\label{E:4.2}
\E_{\nu_n}(F_n) \ge \frac{3}{128}  c_0n.
\end{align}

For $n=1$, \eqref{E:4.2} is true since it follows from \eqref{E:4.1.1}   that $\frac3{32}\frac{ c_0}{d_1(e)}\ge\frac{6}{32} c_0 $.  Assume that our claim is true for $n-1$,  where $n\ge 2$.

Considering for each  $e\in E(L_n)$ the cases $d_T(\Psi(e))\ge \frac{3}{16}c_0$ and  $d_T(\Psi(e))<  \frac3{16}c_0$, we obtain
\begin{align*}
\E_{\nu_n} (F_n)&\ge \frac3{32} c_0\sum_{\substack{e\in E(L_n)\\ d_T(\Psi(e))\ge \frac3{32} c_0 }}  \frac{\nu_n(e)}{d_n(e)}
+\sum_{\substack{e\in E(L_n),\\ d_T(\Psi(e))< \frac3{32} c_0 }} 
 \min\Big( d_T(\Psi(e)),\frac3{32} c_0 \Big) \frac{\nu_n(e)}{d_n(e)}\\ 
&\ge \underbrace{ c_0 \frac12 \frac3{32}  \sum_{\substack{e\in E(L_n),\\ d_T(\Psi(e))\ge \frac3{32} c_0 }} \frac{\nu_n(e)}{d_n(e)} }_{=A}+
 \underbrace{\sum_{e\in E(L_n)}  \min\Big( d_T(\Psi(e)),\frac3{64} c_0 \Big) \frac{\nu_n(e)}{d_n(e)}}_{=B}.
\end{align*}

 For each $C\in \cC^{(n)} $ choose, according to Corollary \ref{C:4.1}, an  edge $e^C=e^C_1\oslash e^C_2\oslash \ldots \oslash e^C_n\in E(C)$ for which  $d_T(\Psi(e_C))\ge \frac{3}{32}c_0 $. The value of $A$ can be estimated as follows:
Since for each $C\in \cC^{(n)}$, the edge  $e_C$ is an element of at most $|C^{(n)}_{e_C}|$  circles of $\cC^{(n)}$ it follows from Proposition \ref{P:4.1} (c) and Corollary \ref{C:4.2}, that
\begin{align} A\ge c_0\frac3{64} \sum_{e\in \{e_C: C\in \cC^{(n)} \}}\frac{\nu_n(e)}{d_n(e)}  \ge  c_0\frac3{64} \sum_{C\in \cC^{(n)}} \frac{\nu_n(e_C)}{d_n(e_C)} \frac1{|C^{(n)}_{e_C}|}= \frac3{128}c_0.
\end{align} 
In order to estimate $B$ we define $P_1=(x_i)_{i=0}^k\smile (y^{(1)}_i)_{i=0}^l\smile(z_i)_{i=0}^m$ and $P_2=(x_i)_{i=0}^k\smile (y^{(2)}_i)_{i=0}^l\smile(z_i)_{i=0}^m$
(\ie\ the two paths from  $s(L)$ to  $t(L)$ in $L$)
and compute 
\begin{align*} B&=\sum_{e_1\in E(L)} \sum_{e'\in E( L_{n-1})} \min\Big(d_T(\Psi(e_1\oslash e')), \frac3{64} c_0 \Big) \frac{\nu_{n-1}(e')}{d_{n-1}(e')} \frac{\nu_1(e_1)}{d_1(e_1)}\\
&= \frac12\sum_{e_1\in E(P_1)} \sum_{e'\in E( L_{n-1})} \min\Big(d_T(\Psi(e_1\oslash e')),  \frac3{64} c_0\Big) \frac{\nu_{n-1}(e')}{d_{n-1}(e')}\\ &\qquad+
 \frac12 \sum_{e_1\in E(P_2)} \sum_{e'\in E( L_{n-1})} \min\Big(d_T(\Psi(e_1\oslash e')),  \frac3{64} c_0\Big) \frac{\nu_{n-1}(e')}{d_{n-1}(e')}.
\end{align*}
The above equality  is true since  $ \frac{\nu_1(e_1)}{d_1(e_1)}=1\iff e_1\in E( (x_i)_{i=0}^k)$ or  $e_1\in E( (z_i)_{i=0}^m)$ $\iff e_1\in E(P_1)\cap E(P_2)$ and 
  $ \frac{\nu_1(e_1)}{d_1(e_1)}=\frac12 \iff e_1\in E( (y^{(1)}_i)_{i=0}^l)$ or  $e_1\in E( (y^{(2)}_i)_{i=0}^l)$.
 For $\sigma=1,2$ we compute
  \begin{align*} 
\sum_{e_1\in E(P_\sigma)} &\sum_{e'\in E( L_{n-1})} \min\Big(d_T(\Psi(e_1\oslash e')), \frac3{64} c_0\Big) \frac{\nu_{n-1}(e')}{d_{n-1}(e')} \\
&= \sum_{e_1\in E(P_\sigma) } d_1(e_1) \sum_{e'\in E( L_{n-1})}  \min\Big(\frac{d_T(\Psi(e_1\oslash e')}{d_1(e_1)} ,\frac1{d_1(e_1)}  \frac3{64}c_0 \Big)  \frac{\nu_{n-1}(e')}{d_{n-1}(e')} \\
&\ge \sum_{e_1\in E(P_\sigma) } d_1(e_1)\sum_{e'\in E( L_{n-1})}  \min\Big(\frac{d_T(\Psi(e_1\oslash e'))}{d_1(e_1)} , \frac3{32}c_0\Big)\frac{\nu_{n-1}(e')}{d_{n-1}(e')}\quad \text{ (by \eqref{E:4.1.1})}\\
&=\sum_{e_1\in E(P_\sigma) }  d_1(e_1) \sum_{e'\in E(  L_{n-1})}\min\Big( d'_{T} (\Psi_{e_1}(e')), \frac3{32} c_0\Big) \frac{\nu_{n-1}(e')}{d_{n-1}(e')},
 \end{align*}  
 where for $e_1\in E(P_\sigma)$  we define $\Psi_{e_1} : V( L_{n-1})\to V(T) $, $v\mapsto \Psi (e_1\oslash v)$, which by Proposition \ref{P:4.2} is an expansive map 
 for the metric $d'_T(\cdot, \cdot)= \frac{d_T(\cdot, \cdot)}{d_L(e_1)}$ on $V(T)$.
 
 It follows, therefore, from the induction hypothesis that for each $\sigma=1,2$
 \begin{align*} 
\sum_{e_1\in E(P_\sigma)} \sum_{e'\in E( L_{n-1})} &\min\Big(d_T(e_1\oslash e'), c_0  \frac3{64} \Big) \frac{\nu_{n-1}(e')}{d_{n-1}(e')} \ge   \frac3{128} c_0 (n-1).
\end{align*} 
and, thus, that  $B\ge   \frac3{128} c_0 (n-1) $ which yields
$$\E_{\nu_n} (F_n)\ge A+B\ge   \frac3{128} c_0 n,$$
and finishes the induction step, and, thus, the proof of the claim.
\end{proof}
\section{Proof of the Main Theorem}

The following result is an immediate consequence of  Lemma \ref{L:2.3.2}, Proposition  \ref{P:2.7.3}, and Theorem \ref{T:4.1}.
\begin{cor}\label{C:5.1}
    Let $(G,d_G)$ be a normalized, geodesic $s$-$t$ graph such that $G$ contains a cycle, and let    $$c_0=\max\big\{\length_{d_G}(C)\ :\ C \text{ is cycle in } G\big\}.$$

    Suppose  $N\in \N$  is such that $G^{\oslash N}$ contains a balanced generalized Laakso $s$-$t$ subgraph, say $L$, whose cycle has length  $c_0$, and  such that $d_{G^{\oslash N}}(e)\leq \frac{c_0}{4}$ for all $e\in E(L)$ and $\nu$ is defined as in Example \ref{Ex:2.4.1} (c) on $E(L)$ and vanishes on $E(G^{\oslash N})\setminus E(L)$.  Let $n\in\N$,  $(T, d_T)$ be  a geodesic tree, and $\Psi: V(G^{\oslash Nn})\to V(T)$ be  an expansive map.

Then 
$$\D_{\nu_n}(\Psi)= \E_{\nu_n}\Big(\frac{d_T(\Psi(e))}{d_{Nn}(e)}\Big)=\sum_{e=\{x,y\}\in E(L^{\oslash n})} \frac{d_T(\Psi(e))}{d_{Nn}(e)} \nu_n(e)\ge \frac{3}{128}  c_0n.$$
\end{cor}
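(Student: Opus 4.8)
The plan is to recognize the left-hand side as the expected distortion studied in Theorem \ref{T:4.1}, applied to the balanced Laakso graph $L$, after observing that $L^{\oslash n}$ sits inside $G^{\oslash Nn}$ as an isometric subgraph carrying exactly the right metric and probability. First I would record the structural identifications. By associativity of slash products (Proposition \ref{P:2.7.1}), $G^{\oslash Nn}=(G^{\oslash N})^{\oslash n}$. Since $L$ is an $s$-$t$ subgraph of $G^{\oslash N}$ with $s(L)=s(G^{\oslash N})$ and $t(L)=t(G^{\oslash N})$, Proposition \ref{P:2.7.3} (applied with base graph $G^{\oslash N}$, inducting on $n$) shows that $L^{\oslash n}$ is an $s$-$t$ subgraph of $G^{\oslash Nn}$, hence by Lemma \ref{L:2.3.2} an isometric geodesic subgraph. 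Consequently, if $d_{L^{\oslash n}}$ denotes the slash-power metric built from the induced normalized geodesic metric $d_L$ on $L$, then $d_{L^{\oslash n}}(u,v)=d_{Nn}(u,v)$ for all $u,v\in V(L^{\oslash n})$, and in particular $d_{L^{\oslash n}}(e)=d_{Nn}(e)$ for every $e\in E(L^{\oslash n})$.

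Next I would verify that $(L,d_L,\nu|_{E(L)})$ meets all the hypotheses of Theorem \ref{T:4.1}. Because $(G^{\oslash N},d_{G^{\oslash N}})$ is a normalized geodesic $s$-$t$ graph and $L$ is an $s$-$t$ subgraph with the same distinguished points, the isometric subgraph property forces $d_L(s(L),t(L))=d_{G^{\oslash N}}(s(G^{\oslash N}),t(G^{\oslash N}))=1$, so $(L,d_L)$ is a normalized geodesic $s$-$t$ graph; $L$ is balanced by assumption; its cycle $C_0$ has length $c_0$ by assumption; and the requirement \eqref{E:4.1.1} holds, since $d_L(e)=d_{G^{\oslash N}}(e)\le c_0/4$ for every $e\in E(L)$ by hypothesis, while $c_0\le 2$ because each of the two branches forming $C_0$ extends to an $s$-$t$ path of $d_L$-length $1$ and hence has $d_L$-length at most $1$. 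By construction $\nu|_{E(L)}$ is the probability of Example \ref{Ex:2.4.1}~(c), so $(L,d_L,\nu|_{E(L)})$ is the measured graph called $(L_1,d_1,\nu_1)$ in Section \ref{S:4}; and since $\nu$ vanishes off $E(L)$, its $n$-th slash power $\nu_n$ on $E(G^{\oslash Nn})$ is supported on $E(L^{\oslash n})$ and agrees there with the $n$-th slash power of $\nu|_{E(L)}$.

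It then remains to invoke Theorem \ref{T:4.1}. Since $\Psi:V(G^{\oslash Nn})\to V(T)$ is expansive and, by the first paragraph, $d_{L^{\oslash n}}$ is the restriction of $d_{Nn}$ to $V(L^{\oslash n})$, the restricted map $\Psi|_{V(L^{\oslash n})}$ is an expansive map from $(V(L^{\oslash n}),d_{L^{\oslash n}})$ into the geodesic tree $(T,d_T)$; Theorem \ref{T:4.1} therefore yields $\D_{\nu_n}(\Psi|_{V(L^{\oslash n})})\ge\frac{3}{128}c_0n$. Finally, because $\nu_n$ is supported on $E(L^{\oslash n})$ and $d_{L^{\oslash n}}(e)=d_{Nn}(e)$ there,
\[
\sum_{e\in E(L^{\oslash n})}\frac{d_T(\Psi(e))}{d_{Nn}(e)}\,\nu_n(e)=\D_{\nu_n}\big(\Psi|_{V(L^{\oslash n})}\big)\ge\frac{3}{128}c_0n,
\]
which is the assertion. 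I expect no real obstacle here: all the analytic content is in Theorem \ref{T:4.1}, and the only genuinely load-bearing point is the identification of the slash-power metric $d_{L^{\oslash n}}$ (and probability $\nu_n$) on $L^{\oslash n}$ with the corresponding restriction from $G^{\oslash Nn}$, which is exactly what Proposition \ref{P:2.7.3} together with Lemma \ref{L:2.3.2} provides.
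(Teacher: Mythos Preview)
Your proposal is correct and follows essentially the same route as the paper, which records Corollary~\ref{C:5.1} as an immediate consequence of Lemma~\ref{L:2.3.2}, Proposition~\ref{P:2.7.3}, and Theorem~\ref{T:4.1}. You have simply spelled out the identifications (isometric embedding of $L^{\oslash n}$ in $G^{\oslash Nn}$, verification of \eqref{E:4.1.1}, and matching of the metric and probability) that the paper leaves implicit.
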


\begin{proof}[Proof of Main Theorem]
The result is clear in the case that $G$ is a tree. Indeed, if $G$ is a tree and an $s$-$t$ graph it
must be a path, and, thus, all slash powers are paths and the result follows trivially. Hence, we may suppose that $G$ contains a cycle. Let $C_0$ be a cycle in $G$ with metric length $c_0$. By Corollary \ref{C:5.1} and the observation that for any $k\in \N$, $|V(G^{\oslash k})|$ is of the order $|V(G)|^k$ it is sufficient to show that for a large enough $N\in \N$, $G^{\oslash N}$ contains a balanced generalized Laakso $s$-$t$ subgraph, $L$, where $d_{G^{\oslash N}}(e)\leq \frac{c_0}{4}$ for all $e\in E(L)$. However, this follows from the following observations that are consequences of the previous results of this paper:

\begin{enumerate}

    \item We can find a generalized Laakso $s$-$t$ subgraph, say $L_1$, of $G^{\oslash 2}$ such that for all $e\in E(L_1)$, $d_{G^{\oslash 2}}(e)<1$. Indeed, we note that since $G$ has a cycle it must contain an $s$-$t$ path with a graph length of at least 2, say $P$. Then in $G^{\oslash 2}$ there must be a cycle $C_1\subset G^{\oslash 2}$ formed by replacing each edge of $C_0$ with the path $P$. By Lemma $\ref{L:2.3.2}$ (1) we can find a generalized Laakso $s$-$t$ subgraph which by the proof of Lemma $\ref{L:2.3.2}$ (1) must satisfy $d_{G^{\oslash 2}}(e)<1$. Note too that $d_{G^{\oslash 2}}(C_1)=c_0$.

    \item There is a large enough $N_1\in \N$ so that for all $n\geq N_1$, $d_{L_1^{\oslash n}}(e)<\frac{c_0}{4}<\frac{1}{2}$ for any $e\in E(L_1^{\oslash n})$. Indeed,  if $\delta:=\max\{d_{G^{\oslash 2}}(e)\ :\ e\in E(L_1)\}$, then by the definition of the metric given on slash powers the first such $n$ such that $\delta^n<\frac{c_0}{4}$ will do.

    \item By the Remark following Lemma \ref{L:2.7.4}, there is an $N_2\in \N$ large enough so that for all $n\geq N_2$, $(L_1^{\oslash n},d_{L_1^{\oslash n}})$ contains a balanced generalized Laakso subgraph. It follows by the proof of Lemma \ref{L:2.7.4} and the same remark that the cycle in this balanced Laakso graph will have metric length $c_0$.

\end{enumerate}

Hence, there exists an $N_3\in \N$ so that, $(L_1^{\oslash N_3},d_{G^{\oslash 2N_3}})$ contains the desired balanced generalized Laakso $s$-$t$ subgraph.
\end{proof}
%
%\begin{rem}\label{R: 5.1}
%
%It may be of interest to explore the constants that determine the multiple in the lower bound for the stochastic embedding of the slash powers in the proof of The Main Theorem. More precisely, if we first find a $(k,l_1,l_2,m)$-Laakso $s$-$t$ subgraph, say $L$, in $G$ formed from a cycle $C_0\subset G$ (as in the proof of Lemma \ref{L:2.3.2}) such that $\length_{d_G}(C_0)=c_0$ then we may define
%$$M_1:=\max\{d_G(e)\ :\ e\in E(L)\},\ \ \ M_{C_0}:=\max\{d_G(e)\ :\ e\in E(C_0)\}$$
%
%If we fix $e_0\in E(L)$ such that $d_G(e_0)=M_1$ and then define
%$$ M_2:=\max\{d_G(e)\ :\ e\in E(L)\setminus \{e_0\}\}$$
%and the constants
%
%$$n_0:=\begin{cases}2+\Bigg\lfloor\frac {\ln (l_2) -\ln (l_1)}{\ln(k+m+l_2)-\ln(k+m+l_1)}\Bigg\rfloor & \text{if } M_1< 1 \\
%
%2 & \text{if } M_1=1
%    
%\end{cases}$$
%
%$$n_1:=\begin{cases}1+\Bigg \lceil-\frac{\ln(2M_{C_0})}{\ln(M_1)}\Bigg \rceil & \text{if } M_1<1 \\
%1 & \text{if $M_1=1$ } 
%\end{cases}\ \ \ \ n_2:=\begin{cases}
%1 & \text{if } M_1<1 \\
%2\cdot \Bigg \lceil\frac{\ln(1/2)}{\ln(M_2)}\Bigg \rceil & \text{if } M_1=1
%\end{cases}$$
%
%then we see that the desired $N\in \N$ can be defined by
%
%$$N:=\max\{n_0,n_1,n_2\}$$
%
%In particular, we then note that 
%
%$$\mathcal{S}_{\mathcal{T}}(G_n,d_{G_n})\geq \frac{3c_0}{128N}\cdot n$$
%
%\end{rem}
%

\begin{bibsection}\begin{biblist}%\bibselect{references_schlump}

\bib{AlonKarpPelegWest1995}{article}{
  author={Alon, N.},
  author={Karp, R.},
  author={Peleg, D.},
  author={West, D.},
  year={1995},
  month={02},
  pages={78-100},
  title={A Graph-Theoretic Game and Its Application to the k -Server Problem},
  volume={24},
  journal={SIAM J. Comput.},
 % doi={10.1137/S0097539792224474},
}

\bib{Bartal1996}{article}{
  author={Bartal, Y.},
  title={Probabilistic approximation of metric spaces and its algorithmic applications},
  booktitle={37th {A}nnual {S}ymposium on {F}oundations of {C}omputer {S}cience ({B}urlington, {VT}, 1996)},
  pages={184--193},
  publisher={IEEE Comput. Soc. Press, Los Alamitos, CA},
  year={1996},
  mrclass={68Q25 (90C27)},
  mrnumber={1450616},
  %doi={10.1109/SFCS.1996.548477},
  %url={https://doi.org/10.1109/SFCS.1996.548477},
}

\bib{BhattLeighton1984}{article}{
  author={Bhatt, S.},
  author={Leighton, T.},
  title={A framework for solving VLSI graph layout problems},
  journal={Journal of Computer and System Sciences},
  volume={28},
  number={2},
  pages={300-343},
  year={1984},
  %issn={0022-0000},
  %doi={https://doi.org/10.1016/0022-0000(84)90071-0},
  %url={https://www.sciencedirect.com/science/article/pii/0022000084900710},
}

\bib{BMSZ2022}{article}{
    AUTHOR = {Baudier, F.},
    author={ Motakis, P.},
    author={Schlumprecht, T.},
     authour={Zs\'{a}k, A.},
     TITLE = {Stochastic approximation of lamplighter metrics},
   JOURNAL = {Bull. Lond. Math. Soc.},
  FJOURNAL = {Bulletin of the London Mathematical Society},
    VOLUME = {54},
      YEAR = {2022},
    NUMBER = {5},
     PAGES = {1804--1826},
      ISSN = {0024-6093},
   MRCLASS = {46B85 (05C05 05C12)},
  MRNUMBER = {4512686},
       %doi = {10.1112/blms.12657},
       %url = {https://doi-org.srv-proxy1.library.tamu.edu/10.1112/blms.12657},
}

\bib{DilworthKutzarovaOstovskii2021}{article}{
author = {Dilworth, S.},
author = {Kutzarova, D.},
author = {Ostrovskii, M.},
year = {2021},
month = {09},
pages = {1-33},
title = {Analysis on Laakso graphs with application to the structure of transportation cost spaces},
volume = {25},
journal = {Positivity},
doi = {10.1007/s11117-021-00821-w}
}

\bib{FakcharoenpholRaoTalwar2004}{article}{
  author={Fakcharoenphol, J.},
  author={Rao, S.},
  author={Talwar, K.},
  title={A tight bound on approximating arbitrary metrics by tree metrics},
  journal={J. Comput. System Sci.},
  fjournal={Journal of Computer and System Sciences},
  volume={69},
  year={2004},
  number={3},
  pages={485--497},
  issn={0022-0000},
  mrclass={05C10 (68R10 68W25)},
  mrnumber={2087946},
  mrreviewer={A. Vijayakumar},
 % doi={10.1016/j.jcss.2004.04.011},
%  url={https://doi-org.srv-proxy1.library.tamu.edu/10.1016/j.jcss.2004.04.011},
}

\bib{Gupta2001}{article}{
  author={Gupta, A.},
  title={Steiner points in tree metrics don't (really) help},
  booktitle={Proceedings of the {T}welfth {A}nnual {ACM}-{SIAM} {S}ymposium on {D}iscrete {A}lgorithms ({W}ashington, {DC}, 2001)},
  pages={220--227},
  publisher={SIAM, Philadelphia, PA},
  year={2001},
  mrclass={05C85 (51E10)},
  mrnumber={1958411},
}

\bib{GuptaNewmanRabinovichSinclair1999}{article}{
  author={Gupta, A. },
  author={ Newman, I.},
  author={Rabinovich, Y.},
  author={Sinclair, A.},
  year={1999},
  month={02},
  pages={399 - 408},
  title={Cuts, trees and l1-embeddings of graphs},
  isbn={0-7695-0409-4},
  journal={Foundations of Computer Science, 1975., 16th Annual Symposium on},
 % doi={10.1109/SFFCS.1999.814611},
}

\bib{KleinRaoAgrawalRavi1995}{article}{
   author={Agrawal, A.},  
   author={Rao, S.},
  author={Ravi, R.},
  title={An approximate max-flow min-cut relation for undirected multicommodity flow, with applications},
  journal={Combinatorica},
  year={1995},
  volume={15},
  pages={187-202},
}

\bib{LeeRaghavendra2010}{article}{
    AUTHOR = {Lee, J.},
    author={Raghavendra, P.},
     TITLE = {Coarse differentiation and multi-flows in planar graphs},
   JOURNAL = {Discrete Comput. Geom.},
  FJOURNAL = {Discrete \& Computational Geometry. An International Journal
              of Mathematics and Computer Science},
    VOLUME = {43},
      YEAR = {2010},
    NUMBER = {2},
     PAGES = {346--362},
      ISSN = {0179-5376},
   MRCLASS = {05C10 (05C85 68R10)},
  MRNUMBER = {2579701},
    %  DOI = {10.1007/s00454-009-9172-4},
  %     URL = {https://doi-org.srv-proxy1.library.tamu.edu/10.1007/s00454-009-9172-4},
}

\bib{LeightonRao1999}{article}{
  author={Leighton,T.},
  author={Rao, S.},
  title={Multicommodity Max-Flow Min-Cut Theorems and Their Use in Designing Approximation Algorithms},
  year={1999},
  publisher={Association for Computing Machinery},
%  address={New York, NY, USA},
  volume={46},
  number={6},
%  issn={0004-5411},
%  url={https://doi.org/10.1145/331524.331526},
%  doi={10.1145/331524.331526},
%  journal={J. ACM},
%  month={nov},
 pages={787-832},
%  numpages={46},
}

\bib{Mathey-PrevotValette2021}{article}{
  author={Mathey-Prevot, M.},
  author={Valette, A.},
  title={Wasserstein distance and metric trees},
  journal={preprint, arXiv2110.02115v1},
}

\bib{RabinovichRaz1998}{article}{
  author={Rabinovich, Y. },
  author={Raz, R.},
  title={Lower bounds on the distortion of embedding finite metric spaces in graphs},
  journal={Discrete Comput. Geom.},
  fjournal={Discrete \& Computational Geometry. An International Journal of Mathematics and Computer Science},
  volume={19},
  year={1998},
  number={1},
  pages={79--94},
}

 \end{biblist}

\end{bibsection}

% \begin{bibsection}\begin{biblist}\bibselect{references_schlump}

% \end{biblist}

% \end{bibsection}

% \bibliographystyle{plain}
% \bibliography{references}

\end{document}